\newtheorem{theorem}{Theorem}[section]
\newtheorem{lemma}[theorem]{Lemma}
\newtheorem{corollary}[theorem]{Corollary}
\newtheorem{proposition}[theorem]{Proposition}
\newtheorem{construction}[theorem]{Construction}
\newtheorem{conjecture}[theorem]{Conjecture}
\theoremstyle{definition}
\newtheorem*{example}{Example}
\newtheorem*{proposition*}{Proposition 2.4}
\theoremstyle{remark}
\newtheorem*{remark}{Remark}
\begin{document}

\Large
 \begin{center}
On the Independence Numbers of the Cyclic Van der Waerden Hypergraphs\footnote{This work was supported by NSF DMS grant no. 1950563.} 

\hspace{10pt}

\large
Benjamin Liber

\hspace{10pt}

\small  
Drexel University \\
bl839@drexel.edu\\

\end{center}

\begin{abstract}
    Building upon the work of Berglund (2018), we establish a method for constructing subsets $B \subseteq \mathbb{Z}_{mk}$ such that $B$ does not contain any $k$-term cyclic arithmetic progressions mod $mk$, where $m,k \in \mathbb{Z}^+$ with $k \geq 3$. This construction thereby provides concrete lower bounds for the maximum size of such subsets. Additionally, it allows us to tightly bound specific chromatic numbers $\chi(mk,k)$ of $\mathbb{Z}_{mk}$ and helps increase the lower bounds of certain cyclic Van der Waerden numbers $W_{c}(k,r)$, originally introduced by Burkert and Johnson (2011) as a way of bounding the standard Van der Waerden numbers $W(k,r)$ from below for $r \geq 2$.
\end{abstract}

\normalsize
\color{black}
\section{Introduction}

\subsection{Notation and Terminology}

A \textit{hypergraph} is a pair $(V,E) = H$, where $V$ is a nonempty set and $E$ is a set of subsets of $V$. The elements $v \in V$ are called the \textit{vertices} of the hypergraph, and the elements $e \in E$ are called the \textit{hyperedges} (henceforth just \textit{edges}) of the hypergraph. 

A set $S \subseteq V$ is \textit{independent} in $H = (V,E)$ if and only if $S$ contains no edges $e \in E$. Correspondingly, the \textit{independence number} of $H$ is the size of the largest independent set in $H$. 

A \textit{j-coloring} of a hypergraph $H = (V,E)$ is a map $c : V \to \{1,2,\ldots,j\}$. An edge $e \in E$ is \textit{monochromatic}, with respect to coloring $c$, if there exists color $i \in \{1,2,\ldots,j\}$ such that $c(v) = i$ for all $v \in E$. That is, all vertices $v \in E$ are assigned the same color. A $j$-coloring of $H$ is said to be \textit{proper} if there exist no monochromatic edges of cardinality at least two. Correspondingly, the \textit{chromatic number} $\chi(H)$ of $H$ is defined to be the smallest number of colors necessary to define a proper coloring of $H$. Note that with a proper coloring of $H$, all vertices assigned the same color form an independent set. Hence, properly coloring the vertices in $V$ is equivalent to partitioning $V$ into independent sets.

Next, let $k,N, d\in \mathbb{Z}^+$ with $k \geq 3$\footnote{Note that we must have $k \geq 3$, for any two consecutive elements of a $k$-subset of $\mathbb{Z}_{N}$ automatically form a two-term cyclic arithmetic progression.}. A \textit{$k$-term cyclic arithmetic progression mod $N$} with \textit{common difference} $d$ is a $k$-element subset of $\mathbb{Z}_{N} := \mathbb{Z} / N\mathbb{Z}$, where each element is of the form $t+id$ for $i \in \{0,1,\ldots,k-1\}$. We say that $t$ is the \textit{base} of the progression and $k$ is the \textit{length}. It should be noted that, although we will use parenthesis $(\hspace{0.3em} )$ to denote such cyclic arithmetic progressions in order to avoid confusion with sets of integers denoted with brackets $\{ \hspace{0.3em}\}$, these progressions are sets themselves, not sequences. Hence, different bases and common differences may give the same cyclic arithmetic progression.

\begin{example}
\label{ex1}
    The $6$-term cyclic arithmetic progression $(0,2,4,6,8,10)$ in $\mathbb{Z}_{12}$ may have any of $0,2,4,6,8,$ or $10$ as a base and, for each choice of base, any of $2,4,8,$ or $10$ as a common difference.

   Similarly, the $5$-term cyclic arithmetic progression $(2,4,6,8,10)$ in $\mathbb{Z}_{12}$ can have base $2$ or $10$ with common difference $2$ or $10$, respectively.
\end{example}

As stated by Berglund \cite{Berglund}, if $d \in \{1,\ldots,N-1\}$ is a common difference of a cyclic arithmetic progression mod $N$, then so is $N-d$. Hence, there always exists a common difference $d$ such that $0 < d < \frac{N}{2}$. We refer to the smallest such positive integer $d$ as \textit{the} common difference of a progression. For example, the common difference of both progressions in the above example is $2$.

Note that $k \geq 3$ implies that when $N$ is even, $\frac{N}{2}$ cannot be a common difference for any $k$-term cyclic arithmetic progression mod $N$, since such progressions must have $k \geq 3$ distinct elements.

\subsection{Cyclic Van der Waerden Hypergraphs \& Numbers}\label{sec:1.2}
Similar to a coloring of a hypergraph, a \textit{$j$-coloring} of an arbitrary set $S$ is a map $c : S \to \{1,\cdots,j\}$. A subset $T \subseteq S$ is \textit{monochromatic}, with respect to coloring $c$, if there exists $i \in \{1,2,\ldots,j\}$ such that $c(t) = i$ for all $t \in T$. That is, all elements $t \in T$ are assigned the same color.

We say that an arbitrary set $S \subseteq \mathbb{Z}$ \textit{contains} a $k$-term (cyclic) arithmetic progression 
if there exist $k$ elements in $S$ that form a $k$-term progression. For reasons that will become clear momentarily, this can be rephrased as follows: $S$ \textit{contains} a $k$-term (cyclic) arithmetic progression if there exists a $k$-element subset $T \subseteq S$ such that the elements of $T$ form a $k$-term progression.

For $k,r \in \mathbb{Z}$ such that $k \geq 3$ and $r \geq 2$, the \textit{Van der Waerden Number} $W(k,r)$ is defined to be the smallest positive integer $N$ such that, for any coloring of $\{0, 1,\ldots, N-1\}$ with $r$ or fewer colors, there exists a monochromatic $k$-term arithmetic progression in $\{0,1,\ldots,N-1\}$. That is, for any coloring of $\{0,1,\ldots,N-1\}$ with $r$ or fewer colors, there exists a $k$-element monochromatic subset of $\{0,1,\ldots,N-1\}$ whose elements form a $k$-term arithmetic progression.

Van der Waerden \cite{Waerden} showed that $W(k,r)$ always exists, but finding $W(k,r)$ for various integers $k$ and $r$ is difficult. As a strategy for handling this problem, an extension of $W(k,r)$ called the \textit{cyclic Van der Waerden Numbers} $W_{c}(k,r)$ was introduced by Burkert and Johnson \cite{Johnson}. $W_{c}(k,r)$ is defined to be the smallest modulus $N$ such that for all positive integers $M \geq N$, if $\mathbb{Z}_{M}$ is $r$-colored, then there exists a monochromatic $k$-term cyclic arithmetic progression mod $M$ in $\mathbb{Z}_{M}$. That is, if $\mathbb{Z}_{M}$ is $r$-colored, then there exists a $k$-term monochromatic subset of $\mathbb{Z}_{M}$ whose elements form a $k$-term cyclic arithmetic progression mod $M$. 

Correspondingly, the \textit{chromatic number} $\chi(N,k)$ of $\mathbb{Z}_N$, for $k,N \in \mathbb{Z}^+$, is the minimum number of subsets necessary to partition $\mathbb{Z}_{N}$ such that no subset contains any $k$-term cyclic arithmetic progressions mod $N$. It is clear from the definitions if $\chi(N,k) = r$, then $W_c(k,r) > N$.

As described by Johnson and Yang \cite{Yang}, the \textit{cyclic Van der Waerden hypergraph} $H_{N,k} = (V_{N},E_{N,k})$ has vertex set $V_{N} = \{0,1,\ldots,N-1\} = \mathbb{Z}_{N}$ and edge set \\ $E_{N,k} =\{k\text{-term cyclic arithmetic progressions mod }N\}$.

Using notation from \cite{Berglund}, let $b(N,k)$ denote the maximum size of a subset $B \subseteq \mathbb{Z}_{N}$ such that $B$ contains no $k$-term cyclic arithmetic progressions mod $N$. In \cite{Johnson}, finding values of $b(N,k)$ was considered to be of possible use bounding the numbers $W_{c}(k,r)$ from below, since $b(N,k)$ is the independence number of $H_{N,k}$. 

\subsection{Main Goal}

Let $D(N,k)$ be the set of possible values of $\gcd(d,k)$, where $d$ ranges over all possible common differences of $k$-term arithmetic progressions mod $N$. Berglund \cite{Berglund} proved the following two results:

\begin{proposition}
    Let $m > 0$. Then, $b(mk,k) \leq mk-m$. If $D(mk,k) = \{1\}$, then $b(mk,k) = mk-m$.
\end{proposition}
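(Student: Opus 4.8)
The plan is to prove the upper bound for every $m$ by a short partition argument, and then, under the extra hypothesis, to exhibit an explicit extremal independent set. For the upper bound, I would work with the subgroup $\langle m\rangle = \{0, m, 2m, \ldots, (k-1)m\}$ of $\mathbb{Z}_{mk}$, which has order $k$, together with its $m$ cosets $C_j = \{j,\, j+m,\, \ldots,\, j+(k-1)m\}$ for $j = 0, 1, \ldots, m-1$. Each $C_j$ is a genuine $k$-term cyclic arithmetic progression mod $mk$: the $k$ listed elements are pairwise distinct because $im \equiv i'm \Mod{mk}$ forces $i \equiv i' \Mod{k}$, and $m$ is a legitimate common difference since $k \geq 3$. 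The sets $C_0, \ldots, C_{m-1}$ partition $\mathbb{Z}_{mk}$ into $m$ edges of $H_{mk,k}$. Hence, if $B$ is independent, it cannot contain any $C_j$ entirely, so $|B \cap C_j| \leq k-1$ for each $j$; summing over the $m$ blocks gives $|B| \leq m(k-1) = mk - m$, i.e.\ $b(mk,k) \leq mk - m$.

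For the lower bound under the hypothesis $D(mk,k) = \{1\}$, I would take $B = \{x \in \mathbb{Z}_{mk} : k \nmid x\}$. The multiples of $k$ in $\{0, 1, \ldots, mk-1\}$ are exactly $0, k, 2k, \ldots, (m-1)k$, so $|B| = mk - m$. To see $B$ is independent, suppose some $k$-term cyclic arithmetic progression mod $mk$ with base $t$ and common difference $d$ lies entirely in $B$. By the definition of $D(mk,k)$, every common difference $d$ of a $k$-term progression mod $mk$ satisfies $\gcd(d,k) \in D(mk,k) = \{1\}$, so $d$ is a unit mod $k$. Then the residues $t,\, t+d,\, \ldots,\, t+(k-1)d$ are pairwise distinct mod $k$ and therefore form a complete residue system mod $k$; in particular one of the progression's elements is $\equiv 0 \Mod{k}$, contradicting $k \nmid x$ for all $x \in B$. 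Thus $B$ contains no edge of $H_{mk,k}$, whence $b(mk,k) \geq mk - m$, and combining with the upper bound yields equality.

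I do not expect a serious obstacle. The only points that require genuine care are verifying that each coset $C_j$ really is a bona fide $k$-term cyclic progression (that its $k$ listed terms are truly distinct, which is exactly where $k \geq 3$ and the structure of $\mathbb{Z}_{mk}$ come in) and, for the lower bound, invoking the definition of $D(mk,k)$ correctly so as to conclude $\gcd(d,k) = 1$ for \emph{every} admissible common difference, not merely for one particular choice. The remaining work is routine bookkeeping.
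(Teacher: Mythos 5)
Your proof is correct and self-contained, and it differs from the paper's own treatment: the paper cites Berglund for the inequality $b(mk,k)\leq mk-m$ and only later re-derives the equality case $D(mk,k)=\{1\}$ as a corollary of its Theorem~\ref{maintheorem}. Your upper-bound argument --- partitioning $\mathbb{Z}_{mk}$ into the $m$ cosets of $\langle m\rangle$, each a bona fide $k$-term cyclic progression with common difference $m$ (legitimate since $m<\tfrac{mk}{2}$ exactly when $k\geq 3$), and observing that an independent set must omit at least one vertex from each edge --- is the natural short partition proof. Your lower-bound set $B=\{x\in\mathbb{Z}_{mk}: x\not\equiv 0\Mod{k}\}$ is a translate of what Construction~\ref{cons:3.3} produces when $D(mk,k)=\{1\}$, namely $F_0=\{k-1,2k-1,\dots,mk-1\}$, and your independence argument is sound: since $\gcd(d,k)=1$ for every admissible common difference $d$, the $k$ terms $t,t+d,\dots,t+(k-1)d$ hit all residues mod $k$, in particular $0$, and you are careful to invoke $D(mk,k)=\{1\}$ for \emph{every} admissible $d$, which is the step that must not be glossed over. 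The trade-off between the two routes is the usual one: the paper's approach goes through the heavier Construction~\ref{cons:3.3} and Theorem~\ref{maintheorem}, which then handle arbitrary $D(mk,k)$ uniformly, whereas yours is direct, elementary, and proves exactly this proposition with no extra machinery.
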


\begin{proposition}
    $b(2k,k) = \begin{cases}
        2k-2 & \text{if $D(2k,k) = \{1\}$} \\
        2k-3 & \text{if $D(2k,k) = \{1,2\}$}.
    \end{cases}$
\end{proposition}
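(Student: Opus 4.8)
The plan is first to recast the two cases in terms of the parity of $k$. In $\mathbb{Z}_{2k}$ a residue $d$ can be the common difference of a $k$-term cyclic progression only if $\gcd(d,2k)\le 2$ (otherwise the $k$ translates $t+id$, $i\in\{0,\dots,k-1\}$, are not distinct), so $\gcd(d,k)\in\{1,2\}$ for every admissible $d$ and hence $D(2k,k)\subseteq\{1,2\}$ always. Also $1\in D(2k,k)$ (take $d=1$), and $2\in D(2k,k)$ exactly when $k$ is even: if $2\mid k$ the even residues and the odd residues are each $k$-term progressions of common difference $2$ with $\gcd(2,k)=2$, while $\gcd(d,k)=2$ forces $2\mid k$. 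So the case $D(2k,k)=\{1\}$ is the case $k$ odd, where $b(2k,k)=2k-2$ is precisely the preceding proposition with $m=2$ and there is nothing more to prove; and $D(2k,k)=\{1,2\}$ is the case $k$ even, where that proposition gives only $b(2k,k)\le 2k-2$ and I must sharpen this to $2k-3$ and exhibit an extremal set of that size.

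Assume now $k$ is even. For the upper bound I would argue by contradiction: suppose $B=\mathbb{Z}_{2k}\setminus S$ is progression-free with $|S|=2$. Since the even residues $A_0$ and the odd residues $A_1$ are $k$-term progressions (common difference $2$) partitioning $\mathbb{Z}_{2k}$, $S$ meets each, so $S=\{a,b\}$ with $a$ even and $b$ odd. The main work — and, I expect, the only real obstacle — is to handle the progressions of \emph{unit} common difference $d$ uniformly in $d$: multiplication by $d$ identifies these with the length-$k$ cyclic ``windows'' in the listing $0,d,2d,\dots,(2k-1)d$ of $\mathbb{Z}_{2k}$, an element $x$ occupying position $xd^{-1}$ in that listing, and two elements are antipodal there (positions differing by $k$) iff their difference equals $kd\pmod{2k}$. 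Hence a window of $k$ consecutive positions fails to avoid $\{a,b\}$ only if $a-b\equiv kd\pmod{2k}$; but $a-b$ is odd and $kd$ is even, so (already for $d=1$) some such window avoids $\{a,b\}$, giving a progression inside $B$ — a contradiction. Thus no progression-free set has size $2k-2$, so $b(2k,k)\le 2k-3$.

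For the matching lower bound I would exhibit $B=\mathbb{Z}_{2k}\setminus\{0,1,k\}$, of size $2k-3$. It misses $0$ and $k$, both even, hence contains neither $A_0$ nor $A_1$ (and these are the only progressions whose common difference is not a unit). For a unit $d$, the elements $0$ and $k$ are always antipodal in the $d$-listing, since $kd^{-1}\equiv k\pmod{2k}$ (as $d^{-1}$ is odd and $k$ even), so they split the listing into two arcs of $k-1$ positions, and the remaining forbidden element $1$, at the odd position $d^{-1}$, lies strictly inside one arc and merely subdivides it; every gap between consecutive forbidden positions is then at most $k-1$, so no window of $k$ consecutive positions avoids $\{0,1,k\}$, i.e.\ every $k$-term progression meets $\{0,1,k\}$ and so is not contained in $B$. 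Together with the parity dictionary of the first paragraph, the two bounds give $b(2k,k)=2k-3$ when $D(2k,k)=\{1,2\}$ and $b(2k,k)=2k-2$ when $D(2k,k)=\{1\}$.
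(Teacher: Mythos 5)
Your proof is correct, modulo one small slip in the lower-bound paragraph: you write that $B=\mathbb{Z}_{2k}\setminus\{0,1,k\}$ ``misses $0$ and $k$, both even, hence contains neither $A_0$ nor $A_1$,'' but missing two \emph{even} elements only rules out $A_0$; the reason $B$ also fails to contain $A_1$ is that it omits the \emph{odd} element $1$. The conclusion is right, only the stated justification is off.

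The paper quotes this proposition from Berglund and does not reprove it, though its general machinery for $m=2$ does recover it. There the upper bound comes from Lemma~\ref{lem:2.1}: after translating so that $0\notin B$, one pins down the excised two-element set as $\{0,k\}$, and then the odd-residue progression $(1,3,\ldots,2k-1)$ lies wholly in $B$, giving the contradiction without ever invoking multiplicative inverses. You instead handle all unit common differences at once by passing to positions $xd^{-1}$ and observing that a pair $\{a,b\}$ of opposite parity is never antipodal in the $d$-listing --- a slicker, uniform argument at $m=2$, but one that is less portable to $m>2$, where the non-unit common differences carry real content and the paper's congruence-class Lemma~\ref{lem:cons} is what does the corresponding work. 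Your extremal witness $\{0,1,k\}$ is exactly the paper's Construction~\ref{cons:3.3} set $F=\{k-1,\,2k-2,\,2k-1\}$ under the hypergraph automorphism $x\mapsto -x-1$, and your ``every gap is at most $k-1$'' observation is the specialization of Lemma~\ref{lem:nonemptycon} to this $F$.
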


\begin{remark}
    As we shall see with Lemma \ref{prop:2.1}, $D(2k,k) = \{1\}$ simply means that $k$ is odd while $D(2k,k) = \{1,2\}$ means that $k$ is even.
\end{remark}

In this work, we extend Berglund's methods, which resolved the case $m \leq 2$, to study $b(mk,k)$ for arbitrary $m \in \mathbb{Z}^+$; equivalently, we investigate $b(N,k)$ when $N$ is a multiple of $k$. 

While Berglund's results for $m \leq 2$ yield exact formulas for $b(mk,k)$, the situation for $m > 2$ is more involved, and only tight upper and lower bounds are attainable. In this work, we aim to prove the following result for $m,k \in \mathbb{Z}^+, k \geq 3$:

\begin{theorem}
\label{maintheorem}
    Suppose $D(mk,k) = \{d_0,d_1,\ldots,d_{j}\}$ for some $j \geq 0$, and let $d_{-1} := 0$. Then \begin{equation*}
        mk-m \geq b(mk,k) \geq mk-\sum_{i=0}^{j}(d_i-d_{i-1})(m-d_i+1).
    \end{equation*}
\end{theorem}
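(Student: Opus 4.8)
Since the upper bound $b(mk,k)\le mk-m$ is exactly the first Proposition recalled above, the plan is to prove only the lower bound, by writing down one explicit set $B\subseteq\mathbb{Z}_{mk}$ that contains no $k$-term cyclic AP mod $mk$ and satisfies $|B|=mk-\sum_{i=0}^{j}(d_i-d_{i-1})(m-d_i+1)$.

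I would organize $\mathbb{Z}_{mk}$ by the reduction map $\pi\colon\mathbb{Z}_{mk}\to\mathbb{Z}_k$, whose fibers $F_r:=\{r+ck:0\le c<m\}$ $(0\le r<k)$ are the cosets of $k\mathbb{Z}_{mk}$, each identified with $\mathbb{Z}_m$ via $r+ck\leftrightarrow c$. First I would record two elementary facts about $D(mk,k)$: each of its elements divides $k$ (it is a gcd with $k$), and each is $\le m$ (if $d$ is a common difference, distinctness of the $k$ points $t+id$ forces $\gcd(d,mk)\le m$, and $\gcd(d,k)$ divides $\gcd(d,mk)$); so $d_0=1<d_1<\cdots<d_j$ are divisors of $k$, all at most $m$. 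Then I would prove the structural lemma: if $P$ is a $k$-AP with base $t$, a common difference $d$, and $g:=\gcd(d,k)$, then $\pi(P)$ is exactly the coset $\{r\in\mathbb{Z}_k:r\equiv t\Mod{g}\}$ (which has $k/g$ elements), and $|P\cap F_r|=g$ for every $r\in\pi(P)$. This is short: $\overline d$ has order $k/g$ in $\mathbb{Z}_k$, so $i\mapsto (t+id)\bmod k$ loops $g$ times around that coset as $i$ runs over $0,\dots,k-1$, and since the $k$ points $t+id\in\mathbb{Z}_{mk}$ are distinct, the loop count equals the fiberwise count.

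Next I would define $B$ by a staircase on the fibers. For $-1\le i\le j$ set $S_i:=\{0,1,\dots,d_i-1\}\subseteq\mathbb{Z}_k$, so $S_{-1}=\emptyset$ and, since $d_{-1}=0$, $S_i\setminus S_{i-1}=\{d_{i-1},\dots,d_i-1\}$; note each $S_i$ is a complete residue system modulo $d_i$. Truncate each fiber to an initial segment: set $\beta_r:=d_i-1$ when $r\in S_i\setminus S_{i-1}$ $(0\le i\le j)$, and $\beta_r:=m$ when $d_j\le r\le k-1$, and put $B:=\{r+ck:0\le r<k,\ 0\le c<\beta_r\}$. Then $|B|=\sum_r\beta_r=\sum_{i=0}^{j}(d_i-d_{i-1})(d_i-1)+m(k-d_j)$, which telescopes (using $\sum_{i=0}^{j}(d_i-d_{i-1})=d_j$) to $mk-\sum_{i=0}^{j}(d_i-d_{i-1})(m-d_i+1)$.

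Finally I would check independence. Let $P$ be a $k$-AP with base $t$, and write $g=\gcd(d,k)=d_i$ for the appropriate $i$. By the structural lemma $\pi(P)$ contains $r_0:=t\bmod d_i\in\{0,\dots,d_i-1\}=S_i$, and since $r_0\in S_\ell\setminus S_{\ell-1}$ for some $\ell\le i$ we get $\beta_{r_0}\le d_\ell-1\le d_i-1$. But $|P\cap F_{r_0}|=d_i>\beta_{r_0}=|B\cap F_{r_0}|$, so $P\cap F_{r_0}\not\subseteq B$, hence $P\not\subseteq B$. I expect the structural lemma and its bookkeeping to be the only step requiring real care: that $\gcd(d,k)$ always lies in $\{d_0,\dots,d_j\}$, that $d_i\le m$ so the truncation lengths $d_i-1$ are legitimate, and that a "full" fiber $r\ge d_j$ cannot host a $k$-AP by itself (automatic — if $k>m$ a fiber has fewer than $k$ points, and if $k\le m$ then $d_j=k$ so there are no full fibers). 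The cardinality count and the transversal argument are then routine; as a sanity check I would confirm the bound specializes to Berglund's exact formulas at $m=1$ and $m=2$, using $D(2k,k)=\{1\}$ for $k$ odd and $\{1,2\}$ for $k$ even.
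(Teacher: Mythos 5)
Your proposal is correct, and the underlying construction coincides with the paper's Construction \ref{cons:3.3} up to the harmless residue relabeling $r \mapsto k-1-r$: in both cases one selects $d_j$ residues mod $k$, partitions them into blocks of sizes $d_i-d_{i-1}$, and deletes from each fiber over a block-$i$ residue exactly $m-d_i+1$ of its $m$ elements, leaving an initial segment of length $d_i-1$. The cardinality telescoping is also the same, via $\sum_i(d_i-d_{i-1})=d_j$.

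Where you genuinely depart from the paper is in the verification that $B$ is independent. The paper proves the stronger structural claim (Lemma \ref{lem:cons}) that a $k$-term AP with $\gcd(d,k)=d_i$ meets the relevant fiber in $d_i$ \emph{cyclically consecutive} positions, and then appeals to the window-covering Lemma \ref{lem:nonemptycon}. You observe, in effect, that the consecutiveness is never used: Lemma \ref{lem:nonemptycon}'s proof is pure pigeonhole on cardinalities. Your structural lemma states only that $\pi\colon\mathbb{Z}_{mk}\to\mathbb{Z}_k$ sends the AP $P$ onto a coset of $g\mathbb{Z}_k$ with $|P\cap F_r|=g$ for each $r$ in its image (via the order-$k/g$ of $\bar d$ in $\mathbb{Z}_k$ and distinctness of the $k$ terms of $P$), and then the single inequality $|P\cap F_{r_0}|=g>\beta_{r_0}=|B\cap F_{r_0}|$ finishes. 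This bypasses both Lemma \ref{lem:cons}'s congruence analysis and Lemma \ref{lem:nonemptycon} entirely, and it also makes clear that the argument does not depend on \emph{which} $\beta_{r_0}$ elements of the fiber are retained, only on how many. That extra flexibility is not exploited here, but it explains structurally why the paper's more intricate consecutiveness bookkeeping was dispensable. One minor bookkeeping point worth making explicit in a writeup: $r_0:=t\bmod g$ lies in $\{0,\dots,g-1\}$ and $g=d_i\le d_j$, so $\beta_{r_0}$ is indeed in the truncated range; you do address this, as well as the $d_i\le m$ legitimacy of the truncation lengths.
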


To prove Theorem \ref{maintheorem}, we will construct specific subsets $B \subseteq \mathbb{Z}_{mk}$ that contain no $k$-term cyclic arithmetic progressions mod $mk$. Consequently, the sizes $|B|$ provide lower bounds for $b(mk,k)$ across various values of $m$ and $k$. 

In Section \ref{sec: lemmas}, we demonstrate how to restrict the number of progressions that must be avoided in $B$, thereby reducing the complexity of bounding $b(mk,k)$. In Section \ref{sec:3}, we describe how to construct such sets $B$ -- more precisely, subsets $F \subseteq\mathbb{Z}_{mk}$ so that $B := \mathbb{Z}_{mk} \setminus F$ -- and use these constructions to prove Theorem \ref{maintheorem}. Then, in Section \ref{sec:implications}, we use our bounds of $b(mk,k)$ to obtain tight upper and lower bounds on the chromatic numbers $\chi(mk,k)$, which in turn yield new lower bounds for the cyclic Van der Waerden numbers $W_{c}(k,r)$ for various $r \geq 2$. Finally, in Section \ref{sec:future}, we highlight possible directions for future work.

\section{Common Differences to Consider}
\label{sec: lemmas}

To bound $b(mk,k)$, it suffices to construct a set $B \subseteq \mathbb{Z}_{mk}$ that contains no $k$-term cyclic arithmetic progressions mod $mk$. To do so, we first establish a method for checking if some arbitrary set $S \subseteq \mathbb{Z}_{mk}$ contains any such progressions. 
Trivially, we can list out all such progressions and see if $S$ contains any of them. However, this is time-consuming and inefficient, especially for large $m$ and $k$. Rather, we can limit the amount of progressions to check for based on common differences. 

\subsection{Some General Lemmas}

For $d \in \mathbb{Z}_{N} \setminus \{0\}$, where $N \in \mathbb{Z}^{+}$, let $\langle d \rangle$ denote the subgroup of $\mathbb{Z}_{N}$ generated by $d$. The following is well-known, but we include a proof of it for the convenience of the reader:

\begin{lemma}
\label{lem:1.1}
    Let $N \in \mathbb{Z}^{+}$ and $d \in \mathbb{Z}_{N} \setminus \{0\}$. Then $|\langle d \rangle| = \frac{N}{gcd(d,N)}$.
\end{lemma}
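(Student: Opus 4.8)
The plan is to prove $|\langle d \rangle| = \frac{N}{\gcd(d,N)}$ by identifying $\langle d \rangle$ explicitly as a set and counting its elements. Write $g := \gcd(d,N)$. First I would observe that $\langle d \rangle = \{0, d, 2d, \ldots\} \pmod N$ consists precisely of those residues $x \in \mathbb{Z}_N$ for which the congruence $td \equiv x \pmod N$ is solvable in $t$. By the standard solvability criterion for linear congruences, $td \equiv x \pmod N$ has a solution if and only if $g \mid x$. Hence $\langle d \rangle = \{0, g, 2g, \ldots, N-g\}$, the set of multiples of $g$ in $\mathbb{Z}_N$, which visibly has $N/g$ elements. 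This is the quickest route and avoids invoking Lagrange's theorem directly, though one could alternatively note that $\langle d \rangle = \langle g \rangle$ and that $g \mid N$, so the multiples of $g$ form a subgroup of order $N/g$.

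The key steps, in order: (1) reduce to showing $\langle d \rangle$ equals the set of multiples of $g$ modulo $N$; (2) prove the forward inclusion — every multiple $td$ of $d$ is a multiple of $g$ since $g \mid d$; (3) prove the reverse inclusion — since $g = \gcd(d,N)$, Bézout gives integers $a,b$ with $ad + bN = g$, so $g \equiv ad \pmod N$, i.e. $g \in \langle d \rangle$, and therefore every multiple of $g$ lies in $\langle d \rangle$; (4) count: the multiples of $g$ in $\{0,1,\ldots,N-1\}$ are exactly $0, g, 2g, \ldots, (\tfrac{N}{g}-1)g$, giving $N/g$ of them. Each step is a one- or two-line verification.

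The only point requiring a little care — and the closest thing to an obstacle, though it is minor — is the reverse inclusion in step (3): one must use that $\gcd(d,N) = g$ is an integer linear combination of $d$ and $N$ (Bézout's identity), and then reduce modulo $N$ to conclude $g \in \langle d \rangle$. Everything else is bookkeeping. I would present the proof in essentially four sentences along these lines.
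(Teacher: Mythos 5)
Your proposal is correct and follows essentially the same route as the paper: both establish $\langle d\rangle=\langle g\rangle$ via the two inclusions ($g\mid d$ for one direction, B\'ezout for the other) and then count the multiples of $g$ in $\mathbb{Z}_N$ using $g\mid N$. Your step (4) merely makes explicit the counting that the paper states in one line.
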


\begin{proof}
    Let $g := \gcd(d,N)$. Then there exists $\lambda,\mu \in \mathbb{Z}$ such that $\lambda d + \mu N = g$. So, $\lambda d \equiv g \mod N$. Hence, $g \in \langle d \rangle \implies \langle g \rangle \subseteq \langle d \rangle$. On the other hand, $g$ divides $d$ in $\mathbb{Z}$, so $\langle d \rangle \subseteq \langle g \rangle$ in $\mathbb{Z}_{N}$. Therefore, $\langle g \rangle = \langle d \rangle$. Since $g$ divides $N$ in $\mathbb{Z}$, we have $|\langle d\rangle| = |\langle g \rangle | = \frac{N}{g} = \frac{N}{\gcd(d,N)}$.
\end{proof}

We are now able to prove the following useful lemma. The forward direction was already proven in \cite{Berglund} but will be proven again here along with the backward direction:

\begin{lemma}
\label{lem:1.2}
    Suppose $k, N \in \mathbb{Z}^{+}$, $k \geq 3$, and there exists a $k$-term cyclic arithmetic progression mod $N$ with common difference $d$. Then $k \leq \frac{N}{gcd(N,d)}$. Conversely, if $k, N, d \in \mathbb{Z}^{+}, d < \frac{N}{2}$, and $3 \leq k \leq \frac{N}{gcd(N,d)}$, then there exists a $k$-term cyclic arithmetic progression mod $N$ with common difference $d$.
\end{lemma}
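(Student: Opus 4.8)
The plan is to prove the two directions separately, both by working with the subgroup $\langle d \rangle \le \mathbb{Z}_N$, whose order is $N/\gcd(N,d)$ by Lemma~\ref{lem:1.1}.

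For the forward direction, suppose there is a $k$-term cyclic arithmetic progression mod $N$ with common difference $d$, say with base $t$, so its elements are $t, t+d, \ldots, t+(k-1)d$. Since a $k$-term cyclic progression is by definition a $k$-\emph{element} subset of $\mathbb{Z}_N$, these $k$ residues must be pairwise distinct. In particular $t+id \ne t+jd$ for $0 \le i < j \le k-1$, i.e. $(j-i)d \ne 0$ in $\mathbb{Z}_N$ for every $j-i \in \{1,\ldots,k-1\}$. Thus $\ell d \ne 0$ for $\ell = 1, \ldots, k-1$, which means the additive order of $d$ in $\mathbb{Z}_N$ is at least $k$. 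By Lemma~\ref{lem:1.1} that order is exactly $|\langle d \rangle| = N/\gcd(N,d)$, so $k \le N/\gcd(N,d)$, as claimed. (This is essentially Berglund's argument, recorded here for completeness.)

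For the converse, assume $d < \tfrac{N}{2}$ and $3 \le k \le N/\gcd(N,d)$. Take the base $t = 0$ and consider the candidate progression $P = \{0, d, 2d, \ldots, (k-1)d\} \subseteq \langle d \rangle$. It suffices to check that these $k$ residues are pairwise distinct, for then $P$ is a genuine $k$-element cyclic arithmetic progression mod $N$ with common difference $d$ in the sense of the definition. If $id \equiv jd \Mod{N}$ with $0 \le i < j \le k-1$, then $(j-i)d \equiv 0 \Mod{N}$ with $1 \le j-i \le k-1$; but the order of $d$ is $N/\gcd(N,d) \ge k$, so $(j-i)d \equiv 0$ forces $j - i \ge N/\gcd(N,d) \ge k$, a contradiction. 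Hence the $k$ elements are distinct and $P$ is the desired progression.

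I do not expect a genuine obstacle here; the only subtlety worth flagging is the hypothesis $d < \tfrac{N}{2}$. Strictly speaking, distinctness of $0, d, \ldots, (k-1)d$ only needs $k \le N/\gcd(N,d)$, so the bound $d < \tfrac{N}{2}$ is not used in the argument above and appears to be included only to match the paper's convention that ``the'' common difference of a progression is the least representative with $0 < d < \tfrac{N}{2}$ — i.e. to guarantee that $d$ is canonically the common difference of the progression $P$ we produce, rather than an accidental artifact. If the authors instead want the conclusion phrased with this canonical normalization, one extra line is needed: replace $d$ by $\min(d, N-d)$, note this does not change $\gcd(N,d)$ nor the progression generated, and conclude. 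I would state the lemma exactly as given and remark in the proof that the hypothesis $d < \tfrac N2$ is needed only so that $d$ is literally \emph{the} common difference in the paper's sense.
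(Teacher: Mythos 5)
Your proof is correct and follows essentially the same route as the paper's: both directions hinge on Lemma~\ref{lem:1.1} and the concrete progression $(0, d, \ldots, (k-1)d) \subseteq \langle d \rangle$, with the forward direction bounding $k$ by $|\langle d \rangle|$ and the converse exhibiting that progression explicitly. Your side remark about the hypothesis $d < \tfrac{N}{2}$ being unused for distinctness and serving only to ensure $d$ is \emph{the} common difference in the paper's canonical sense is accurate and worth keeping.
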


\begin{proof}
    Let $k,N \in \mathbb{Z}^{+}$, $k \geq 3$, and suppose
    $A = (t,t+d,\ldots,t+(k-1)d)$ be a $k$-term cyclic arithmetic progression mod $N$ with common difference $d$. Then A is a translation by $t$, in $\mathbb{Z}_{N}$, of $(0,d,\ldots,(k-1)d)$; and $(0,d,\ldots,(k-1)d)$ is a $k$-element subset of $\langle d \rangle$. Hence, $|A| = k \leq |\langle d \rangle | = \frac{N}{\gcd(N,d)}$ by Lemma \ref{lem:1.1}.

    On the other hand, let $k,N,d \in \mathbb{Z}^{+}$, $d < \frac{N}{2}$, and suppose $3 \leq k \leq \frac{N}{\gcd(N,d)}$. Then $(0,d,\ldots,(k-1)d)$ is a $k$-element subset of $\mathbb{Z}_{N}$ and also a $k$-term cyclic arithmetic progression mod $N$ with common difference $d$.
\end{proof}

Recall that $D(N,k)$ is defined to be the set of possible values of $\gcd(d,k)$, where $d$ ranges over all possible common differences of $k$-term arithmetic progressions mod $N$. In order to check if an arbitrary set $S \subseteq \mathbb{Z}_{N}$ contains any $k$-term cyclic arithmetic progressions mod $N$, Lemma \ref{lem:1.2} shows it suffices to check only for progressions with common differences $d \in D(N,k)$.

\subsection{Specific Case of  $N=mk$}
We are ready to focus our attention to the case of $N = mk$ for $m,k \in \mathbb{Z}^+$ such that $k \geq 3$. We have the following:

\begin{lemma}
\label{prop:2.1}
    $D(mk,k) = \{g \in \mathbb{Z}^{+} : g \leq m, \ g \mid k\}$
\end{lemma}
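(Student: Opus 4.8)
The plan is to prove the two inclusions separately, using Lemma~\ref{lem:1.2} to translate ``$d$ is a common difference of a $k$-term progression mod $mk$'' into the purely arithmetic condition $k \le \frac{mk}{\gcd(mk,d)}$, i.e. $\gcd(mk,d) \le m$.

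First I would show $D(mk,k) \subseteq \{g : g \le m,\ g \mid k\}$. Suppose $g \in D(mk,k)$, so $g = \gcd(d,k)$ for some common difference $d$ of a $k$-term cyclic progression mod $mk$. Trivially $g \mid k$. For the bound $g \le m$: by Lemma~\ref{lem:1.2} we have $k \le \frac{mk}{\gcd(mk,d)}$, hence $\gcd(mk,d) \le m$. Since $g = \gcd(d,k)$ divides both $d$ and $k \mid mk$, it divides $\gcd(mk,d)$, so $g \le \gcd(mk,d) \le m$.

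Next I would prove the reverse inclusion $\{g : g \le m,\ g \mid k\} \subseteq D(mk,k)$. Given $g \mid k$ with $g \le m$, I need to exhibit a common difference $d$ of some $k$-term cyclic progression mod $mk$ with $\gcd(d,k) = g$. The natural candidate is $d = g$ itself (after checking $g < \frac{mk}{2}$, which holds since $g \le m \le \frac{mk}{3} < \frac{mk}{2}$ as $k \ge 3$; the degenerate case $m = 1$ forces $g = 1$ and can be handled directly, noting $mk = k \ge 3$ so $1 < k/2$). By the converse direction of Lemma~\ref{lem:1.2}, a $k$-term cyclic progression mod $mk$ with common difference $g$ exists provided $3 \le k \le \frac{mk}{\gcd(mk,g)}$; since $g \mid k$ gives $\gcd(mk,g) = g$, this reads $k \le \frac{mk}{g}$, i.e. $g \le m$, which is our hypothesis. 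So $g$ is a valid common difference, and $\gcd(g,k) = g$ since $g \mid k$, giving $g \in D(mk,k)$.

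The main subtlety — not really an obstacle — is bookkeeping around the ``\emph{the} common difference'' convention (the requirement $d < \frac{N}{2}$ in the converse of Lemma~\ref{lem:1.2}) and the boundary cases $m=1$ and small $k$; one must confirm $g < \frac{mk}{2}$ so that Lemma~\ref{lem:1.2} applies with $d = g$. A second point worth a sentence: in the forward direction, $g = \gcd(d,k)$ where $d$ is \emph{some} common difference, not necessarily the canonical one $< \frac{N}{2}$; but Lemma~\ref{lem:1.2}'s first (forward) statement places no such restriction on $d$, so the argument $g \mid \gcd(mk,d) \le m$ goes through regardless. Combining both inclusions yields the claimed equality.
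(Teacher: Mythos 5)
Your proposal is correct and follows essentially the same route as the paper: the same two inclusions, with the forward direction resting on $\gcd(d,k)\mid\gcd(mk,d)\le m$ and the reverse direction witnessed by a progression of common difference $g$ itself. The only cosmetic difference is that you invoke both directions of Lemma~\ref{lem:1.2} explicitly (including the careful check that $g<\frac{mk}{2}$), whereas the paper inlines the subgroup-order argument and exhibits $(0,g,\ldots,(k-1)g)$ directly.
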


\begin{proof}
    First, consider a positive integer $g$ such that $g \leq m$ and $g \mid k$. Then $0, g, \ldots, (k-1)g$ are all distinct mod $mk$ and thus form a $k$-term cyclic arithmetic progression mod $mk$ with common difference $g$. So, $\text{gcd}(g,k) = g \in D(mk,k)$.

    On the other hand, suppose $d$ is the common difference of some $k$-term cyclic arithmetic progression mod $mk$ labeled A. Let $\langle d \rangle$ denote the cyclic subgroup of $\mathbb{Z}_{mk}$ generated by $d$. Since $A$ is a translate of $k$ distinct elements of $\langle d \rangle$, $k \leq |\langle d \rangle | = \frac{mk}{\text{gcd}(mk,d)}$. Hence, $g := \text{gcd}(k,d) \leq \text{gcd}(mk,d) \leq \frac{mk}{k} = m$. Thus, $g$, an arbitrary member of $D(mk,k)$, is a positive integer satisfying $g \leq m$ and $g \mid k$.
\end{proof}

\begin{remark}
    For integers $N,k$ satisfying $3 \leq k \leq N$, we note that $(0, 1, \ldots, k-1)$ is a $k$-term cyclic arithmetic progression mod $N$. Hence, we always have $1 \in D(N,k)$. This conclusion also follows from Lemma \ref{prop:2.1}, where $N$ is a multiple of $k$. On the other hand, the five-term cyclic arithmetic progression $(2,4,6,8,10)$ in $\mathbb{Z}_{12}$ shows that when $N$ is not a multiple of $k$, there may be elements of $D(N,k)$ which do not divide $k$.
\end{remark}

Recall that the following was proven in \cite{Berglund}:

\begin{proposition}
 \label{prop:1.1}
$b(mk,k) \leq mk-m$. If $D(mk,k) = \{1\}$, then $b(mk,k) = mk-m$.
\end{proposition}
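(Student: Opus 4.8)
The plan is to handle the two assertions with two different subgroups of $\mathbb{Z}_{mk}$: $\langle m\rangle$ for the upper bound and $\langle k\rangle$ for the equality case. For the inequality, note that $\langle m\rangle$ has order $\tfrac{mk}{\gcd(mk,m)} = k$ by Lemma~\ref{lem:1.1}, so its $m$ cosets partition $\mathbb{Z}_{mk}$; and since $k \ge 3$, each coset $a + \langle m\rangle = (a,\, a+m,\, \ldots,\, a+(k-1)m)$ is a genuine $k$-term cyclic arithmetic progression mod $mk$ --- it consists of exactly $k$ distinct elements and is of the required form with common difference $m$. Hence a set $B \subseteq \mathbb{Z}_{mk}$ that contains no $k$-term cyclic arithmetic progression cannot contain any one of these $m$ cosets, so $|B \cap (a+\langle m\rangle)| \le k-1$ for each. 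Summing over the $m$ cosets yields $|B| \le m(k-1) = mk - m$, i.e.\ $b(mk,k) \le mk - m$.

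For the equality statement, assuming $D(mk,k) = \{1\}$, I would exhibit an independent set of size $mk - m$, namely $B := \mathbb{Z}_{mk} \setminus \langle k\rangle$, where $\langle k\rangle = \{0,\, k,\, 2k,\, \ldots,\, (m-1)k\}$ has order $m$ by Lemma~\ref{lem:1.1}, so $|B| = mk - m$. It then remains to verify that $B$ contains no $k$-term cyclic arithmetic progression. Given any such progression $A = (t,\, t+d,\, \ldots,\, t+(k-1)d)$ with common difference $d$, Lemma~\ref{prop:2.1} gives $\gcd(k,d) \in D(mk,k) = \{1\}$, so $\gcd(k,d) = 1$; consequently the map $i \mapsto t + id$ is a bijection on $\mathbb{Z}_k$, and in particular some $t + i_0 d$ with $i_0 \in \{0,\ldots,k-1\}$ is divisible by $k$, hence lies in $\langle k\rangle$. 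Therefore $A \not\subseteq B$. As $A$ was arbitrary, $B$ is independent in $H_{mk,k}$, giving $b(mk,k) \ge mk - m$; together with the upper bound this forces $b(mk,k) = mk - m$.

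The order computations and the claim that a coset of $\langle m\rangle$ is a legitimate $k$-term progression are immediate from Lemma~\ref{lem:1.1} (and the hypothesis $k \ge 3$), so there is no serious obstacle. The one place requiring attention is the equality case: it is crucial to invoke Lemma~\ref{prop:2.1} to conclude that $D(mk,k) = \{1\}$ forces \emph{every} common difference of a $k$-term progression to be coprime to $k$, because the argument that $\langle k\rangle$ meets every progression rests entirely on $\gcd(d,k) = 1$. When $D(mk,k)$ is larger, progressions whose common difference shares a factor with $k$ can avoid $\langle k\rangle$ altogether, and the value $mk-m$ is generally not attained --- precisely the situation refined by Theorem~\ref{maintheorem}.
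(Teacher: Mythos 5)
Your proof is correct. Note, however, that the paper itself does not prove this proposition; it cites Berglund for both halves, and only later (after Theorem~\ref{maintheorem}) gives an alternative derivation of the second half. Your argument is therefore a welcome self-contained proof rather than a match to an in-paper argument.

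For the upper bound you decompose $\mathbb{Z}_{mk}$ into the $m$ cosets of $\langle m\rangle$, observe each coset is itself a $k$-term cyclic arithmetic progression (here $k\ge 3$ is needed so $m < mk/2$, and the coset has exactly $k$ distinct elements since $|\langle m\rangle|=k$ by Lemma~\ref{lem:1.1}), and double-count: at most $k-1$ survivors per coset gives $|B|\le m(k-1)$. This is a clean, direct argument that the paper delegates entirely to Berglund.

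For the equality case your set $B=\mathbb{Z}_{mk}\setminus\langle k\rangle$ is, up to a translation by $-1$, exactly the complement of $F_0=\{k-1,2k-1,\ldots,mk-1\}$ that Construction~\ref{cons:3.3} produces when $D(mk,k)=\{1\}$. But where the paper routes the verification through the general machinery of Lemma~\ref{lem:cons} and Theorem~\ref{maintheorem}, you argue directly: $\gcd(d,k)=1$ makes $i\mapsto t+id$ a bijection mod $k$, so every progression hits the residue class $0\bmod k$, i.e.\ $\langle k\rangle$. This is essentially the content of Lemma~\ref{lem:cons} specialized to $d_i=1$, stripped of the bookkeeping needed for the general $D(mk,k)$. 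The trade-off is the usual one: the paper's approach generalizes to several common differences at once; yours is shorter and more transparent in this single case. One tiny remark: you don't actually need Lemma~\ref{prop:2.1} to conclude $\gcd(k,d)\in D(mk,k)$; that is immediate from the definition of $D(mk,k)$. Lemma~\ref{prop:2.1} would instead tell you \emph{when} the hypothesis $D(mk,k)=\{1\}$ holds.
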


Going forth, we focus on constructing sets $B \subseteq \mathbb{Z}_{mk}$ such that $B$ lacks any $k$-term cyclic arithmetic progressions mod $mk$. For ease of notation, we set $F := \mathbb{Z}_{mk} \setminus B$. 

Using methods similar to those of Berglund \cite{Berglund}, we have the following result:

\begin{lemma}
\label{lem:2.1}
    If $|D(mk,k)| > 1$, then $b(mk,k) < mk-m$.
\end{lemma}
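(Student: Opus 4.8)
The plan is to show that no $(mk-m)$-element subset $B \subseteq \mathbb{Z}_{mk}$ can avoid all $k$-term cyclic arithmetic progressions mod $mk$, so that $b(mk,k) \leq mk-m-1$; combined with Proposition 1.5 (Proposition \ref{prop:1.1}) this gives strict inequality. Since $|D(mk,k)| > 1$, Lemma \ref{prop:2.1} tells us there is a prime-or-composite divisor $g$ of $k$ with $1 < g \leq m$; fix such a $g$. The key structural fact I would exploit is the one already implicit in the proof of Proposition \ref{prop:1.1}: a set of size $mk-m$ avoiding all progressions with common difference $1$ must consist of a ``full'' union of cosets of $\langle k \rangle$ minus exactly one element from $m$ of those cosets — more carefully, since the progressions $(t, t+1, \ldots, t+k-1)$ partition into $m$ blocks of $k$ consecutive residues and $B$ misses at least one element from each block of $k$ consecutive residues, $F := \mathbb{Z}_{mk} \setminus B$ is a transversal-like set hitting every window of $k$ consecutive residues, and $|F| = m$ forces $F$ to hit each of the $m$ disjoint blocks $\{0,\ldots,k-1\}, \{k,\ldots,2k-1\},\ldots$ exactly once. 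So up to translation $F = \{a_0, a_1 + k, a_2 + 2k, \ldots, a_{m-1} + (m-1)k\}$ with each $a_i \in \{0,\ldots,k-1\}$, and moreover the ``gap'' condition (every $k$ consecutive residues contain a point of $F$) constrains the $a_i$.

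The heart of the argument is then to produce a $k$-term progression with common difference $g$ that avoids $F$. Consider progressions of the form $P_t = (t, t+g, t+2g, \ldots, t+(k-1)g)$; since $g \mid k$, this progression is a coset of $\langle g \rangle$, which is a union of $g$ of the blocks? No — more precisely $\langle g \rangle$ has order $mk/\gcd(mk,g) = mk/g$, and $P_t$ visits exactly $k/g$ distinct residues in each of $g$ consecutive... I would instead count directly: $P_t$ as a subset of $\mathbb{Z}_{mk}$ meets each block $\{jk, \ldots, (j+1)k-1\}$ in exactly $k/g$ points (because within one block the step-$g$ arithmetic progression wraps through $k/g$ values before leaving the block, and $g \mid k$ makes this uniform across blocks and across starting points $t$). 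There are $mk$ choices of base $t$, but each progression $P_t$ is counted $k$ times (once per base), and the $mk$ bases split into $g$ translation classes; within each class, as $t$ ranges over the $k/g$... The cleanest route: count pairs $(t, f)$ with $f \in P_t \cap F$. For fixed $f \in F$, the number of $t \in \mathbb{Z}_{mk}$ with $f \in P_t$ is exactly $k$ (namely $t = f - ig$ for $i = 0,\ldots,k-1$, all distinct since these are $k$ distinct elements of a coset of $\langle g\rangle$). Hence $\sum_{t} |P_t \cap F| = k|F| = km = km$; but the progressions $P_t$ for $t$ in a fixed residue class mod $g$ are all equal as sets and there are exactly $k$ bases giving each, so among the $g$ genuinely distinct progressions of this shape, the average size of $P_t \cap F$ is $km/(g \cdot (mk/k)) $... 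I need to recount the number of distinct such progressions: $P_t = P_{t'}$ iff $t \equiv t' \pmod{g}$ (using $d = g < mk/2$ and that the progression has $\geq 3$ distinct terms), so there are exactly $g$ distinct progressions $P_0, P_1, \ldots, P_{g-1}$, and $\sum_{i=0}^{g-1} |P_i \cap F| = |F| = m$ because the $P_i$ partition $\mathbb{Z}_{mk}$? They do partition: the cosets of $\langle g \rangle$... wait, $P_i$ is not a coset, it is the first $k$ terms of the step-$g$ walk, and since $|\langle g\rangle| = mk/g$ while $P_i$ has $k$ terms, $P_i$ is a proper subset of a coset of $\langle g \rangle$ when $mk/g > k$, i.e. when $m > g$. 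So this partition claim fails in general and I must be more careful: I would restrict to the case $m = g$ first (where $P_0, \ldots, P_{g-1}$ genuinely partition $\mathbb{Z}_{mk}$, each being a full coset of $\langle g\rangle$ of size $k$), conclude some $P_i$ has $|P_i \cap F| \leq \lfloor m/g \rfloor = 1$, and handle equality separately; then bootstrap or argue the general $m > g$ case by choosing the base $t$ inside a single coset of $\langle g \rangle$ of size $mk/g \geq k$ and invoking Lemma \ref{lem:1.2} plus a pigeonhole on which $k$-window of that coset avoids the (few) points of $F$ lying in it.

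The main obstacle, then, is exactly this last pigeonhole in the generic case $m > g > 1$: I must show that among the $mk/g$ ``rotations'' of a $k$-term window inside a fixed size-$(mk/g)$ coset $C$ of $\langle g\rangle$, at least one window misses $F \cap C$ entirely. This requires knowing $|F \cap C|$ is small enough relative to the number of windows and the spacing of $F \cap C$ within $C$ — and here I would feed in the rigidity of $F$ established above (one point per length-$k$ block, plus the every-$k$-consecutive-residues-hit condition) to show $F \cap C$ is ``spread out'' in $C$ in a way incompatible with blocking all $k$-windows, unless $F$ has a very special periodic form, which one then rules out directly (or shows it, too, fails to block some difference-$g$ progression, possibly by symmetry/translation). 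I expect the bookkeeping about how $F$'s one-per-block structure interacts with the coset decomposition of $\langle g \rangle$ to be the delicate part; everything else is counting and applications of Lemmas \ref{lem:1.1}, \ref{lem:1.2}, and \ref{prop:2.1}.
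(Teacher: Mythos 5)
Your proposal correctly identifies the opening move — pass to the complement $F := \mathbb{Z}_{mk}\setminus B$, assume $|F| = m$, and exploit the difference-$1$ progressions — but there is a genuine gap at the crucial rigidity step, and the subsequent counting argument, as you yourself acknowledge, is not finished. Concretely: you observe that $F$ must hit every window of $k$ consecutive residues and that $|F|=m$ forces exactly one point of $F$ per block $\{jk,\dots,(j+1)k-1\}$, so $F=\{a_0, a_1+k,\dots, a_{m-1}+(m-1)k\}$ with $a_i\in\{0,\dots,k-1\}$, and then you say the gap condition merely ``constrains'' the $a_i$. It does far more than constrain them — it pins $F$ down completely. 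Writing $F=\{f_1<\dots<f_m\}$ and translating so $f_1=0$, the $m$ cyclic gaps $f_{j+1}-f_j$ (indices cyclic, so the last gap is $mk-f_m$) each satisfy $f_{j+1}-f_j\le k$ and sum to $mk$; since there are $m$ of them each at most $k$ with total $mk$, \emph{every} gap equals $k$, hence $F=\{0,k,2k,\dots,(m-1)k\}=\langle k\rangle$. That is the whole point: $F$ is exactly the subgroup $\langle k\rangle$, not just a one-per-block transversal. This is where the paper's argument lives, and your proposal leaves it at a hedge.

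Once $F=\langle k\rangle$ is established, the remainder is a single observation, and the entire coset/double-counting machinery in the second half of your proposal is unnecessary. Take any $d\in D(mk,k)$ with $d>1$; by Lemma \ref{prop:2.1}, $d\mid k$. Then $A=(1,1+d,\dots,1+(k-1)d)$ is a $k$-term cyclic arithmetic progression mod $mk$ whose every element is $\equiv 1\pmod d$, while every element $jk$ of $F$ is $\equiv 0\pmod d$ (because $d\mid k$). So $A\cap F=\emptyset$, i.e. $A\subseteq B$, contradicting that $B$ avoids all $k$-term progressions. No pigeonhole over windows of a coset of $\langle g\rangle$ is needed, and the delicate ``bookkeeping about how $F$'s one-per-block structure interacts with the coset decomposition'' that you flag as the obstacle simply does not arise once $F$ is fully determined. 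I'd encourage you to go back and close the rigidity step — it is an elementary averaging argument over the gaps — and then replace the entire second half of the proposal with the one-line congruence observation above.
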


\begin{proof}
 Suppose $|D(mk,k)| > 1$ and let $B \subseteq \mathbb{Z}_{mk}$ such that $B$ contains no $k$-term cyclic arithmetic progression mod $mk$. We can suppose that $|B| = b(mk,k)$. Then, by Proposition \ref{prop:1.1}, $|B| = b(mk,k) \leq mk-k$. 
 
 Suppose towards contradiction that $b(mk,k) = mk-m$. Since any translate of $B$ will also avoid all $k$-term cyclic arithmetic progressions mod $mk$, without loss of generality, suppose that $0 \not \in B$. Let $\mathbb{Z}_{mk} \setminus B = F = \{f_{1}, \ldots, f_{m}\}$, where $0 = f_{1} < f_{2} < \ldots < f_{m} \leq mk-1$. 
 Note that, for $1 \leq j \leq m-1$, we must have $f_{j+1} - f_{j} \leq k$, since otherwise $(f_{j}+1, f_{j}+2,\ldots,f_{j}+k)$ would be a $k$-term cyclic arithmetic progression mod $mk$ contained in $B$. However, if $f_{j+1}-f_{j} < k$ for some $j$, $1 \leq j \leq m-1$, then $f_{m} = \sum_{j=1}^{m-1} (f_{j+1}-f_{j}) < k(m-1) = mk-k$, meaning $(f_{m}+1, \ldots, f_{m}+k)$ would be a $k$-term cyclic arithmetic progression mod $mk$ with common difference $1$ contained in $B$. So, we must have $f_{j+1}-f_{j} = k$ for $1 \leq j \leq m-1$. Thus, $F = \{0,k,2k,\ldots,(m-1)k\}$.

 Now, since $|D(mk,k)| > 1$ and $1 \in D(mk,k)$, Lemma \ref{prop:2.1} says there exists $d \in D(mk,k)$ such that $d > 1$ and $d \vert k$. We have that $A:=(1,1+d,1+2d,\ldots,1+(k-1)d)$is a $k$-term cyclic arithmetic progression mod $mk$. But no element of $A$ equals $jk \mod mk$ for any integer $0 \leq j < m$. Hence, $F$ does not contain any elements of $A$. So, $A$ is a $k$-term cyclic arithmetic progression mod $mk$ contained in $B$, a contradiction. Hence, we have $b(mk,k) < mk-k$. \qedhere
 \end{proof}

 \begin{corollary}
    Suppose $m,k \in \mathbb{Z}^{+}$, $k \geq 3$, and $m < k$. Then $b(mk,k) = mk-m$ if and only of each divisor of $k$ greater than $1$ is also greater than $m$.
\end{corollary}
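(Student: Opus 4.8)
The plan is to derive the statement purely from the three preceding facts: Proposition \ref{prop:1.1}, Lemma \ref{lem:2.1}, and the description of $D(mk,k)$ in Lemma \ref{prop:2.1}. The first step is to record that $1 \in D(mk,k)$ always, which is immediate from Lemma \ref{prop:2.1} since $1 \leq m$ and $1 \mid k$. Hence there are only two possibilities: either $D(mk,k) = \{1\}$ (equivalently $|D(mk,k)| = 1$), or $|D(mk,k)| > 1$.

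The second step is to prove the intermediate equivalence $b(mk,k) = mk-m \iff D(mk,k) = \{1\}$. One direction is exactly the second sentence of Proposition \ref{prop:1.1}. For the other direction I argue by contraposition: if $D(mk,k) \neq \{1\}$, then by the first step $|D(mk,k)| > 1$, so Lemma \ref{lem:2.1} yields $b(mk,k) < mk-m$, and in particular $b(mk,k) \neq mk-m$.

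The third step is to unpack the condition $D(mk,k) = \{1\}$ via Lemma \ref{prop:2.1}: since $D(mk,k) = \{g \in \mathbb{Z}^+ : g \leq m,\ g \mid k\}$, the equality $D(mk,k) = \{1\}$ says precisely that no integer $g$ with $1 < g \leq m$ divides $k$ — that is, every divisor of $k$ exceeding $1$ must also exceed $m$. Composing this with the equivalence of the second step gives the claimed statement.

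I do not anticipate a genuine obstacle; the corollary is bookkeeping on top of results already in hand. The one point worth a remark is that the hypothesis $m < k$ is not an extra assumption but is forced by the right-hand condition: since $k \geq 3$, the number $k$ is itself a divisor of $k$ greater than $1$, so the condition ``every such divisor exceeds $m$'' already entails $k > m$. Consequently no separate treatment of the regime $m \geq k$ is needed — there the right-hand condition simply fails, and correspondingly $k \in D(mk,k)$ forces $|D(mk,k)| > 1$, so the equivalence still holds.
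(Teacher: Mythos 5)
Your proof is correct and takes essentially the same approach the paper intends, chaining Proposition \ref{prop:1.1}, Lemma \ref{lem:2.1}, and Lemma \ref{prop:2.1} to obtain $b(mk,k)=mk-m \iff D(mk,k)=\{1\} \iff$ every divisor of $k$ greater than $1$ exceeds $m$. Your closing remark that the hypothesis $m<k$ is in fact redundant (it is implied by the right-hand condition, and both sides of the equivalence fail when $m\ge k$) is a correct observation not made explicit in the paper.
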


\begin{proof}
    The conclusion follows from Lemmas \ref{prop:2.1} and \ref{lem:2.1}, and Proposition \ref{prop:1.1}.
\end{proof}

\section{Bounds for $b(mk,k)$}\label{sec:3}

In this section, we generalize Berglund's findings to bound $b(mk,k)$ for all $m \in \mathbb{Z}^+$. Fix $m,k \in \mathbb{Z}^+$ with $k \geq 3$. In order to prove Theorem \ref{maintheorem}, we will construct $F \subseteq \mathbb{Z}_{mk}$ such that $B = \mathbb{Z}_{mk} \setminus F$ lacks at least one element from every possible $k$-term cyclic arithmetic progression mod $mk$. Thus, $|B|$ will become a lower bound for $b(mk,k)$. 

\subsection{Important Lemmas}
For $d \in \mathbb{Z}^{+}$ and $x \in \mathbb{Z}_d$, let $[x]_d := \{n \in \mathbb{Z} : n \equiv x \mod d\}$. The following lemmas are crucial in the proof of Theorem \ref{maintheorem}:
    
\begin{lemma}
\label{lem:3.1real}
    Let $d,N \in \mathbb{Z}^+$ such that $d \mid N$, and suppose $A$ is a cyclic arithmetic progression mod $N$ with common difference $d$. Then all residue classes of the elements of $A$ lie in a single congruence class of $\mathbb{Z}_{d}$.
\end{lemma}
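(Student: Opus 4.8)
The plan is to use the natural reduction homomorphism. Since $d \mid N$, the map $\pi \colon \mathbb{Z}_N \to \mathbb{Z}_d$ given by $\pi(n + N\mathbb{Z}) = n + d\mathbb{Z}$ is well-defined, because $n \equiv n' \Mod{N}$ forces $n \equiv n' \Mod{d}$. This single fact is essentially the whole content of the lemma: it is precisely what allows us to speak of ``the residue class modulo $d$ of an element of $\mathbb{Z}_N$'' in the first place.

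Next I would write $A = \{\, t + id : 0 \leq i \leq k-1 \,\}$ for some base $t \in \mathbb{Z}_N$ and length $k$, using the hypothesis that $d$ is a common difference of $A$. Applying $\pi$ and using that $\pi$ is a group homomorphism with $\pi(d) = d + d\mathbb{Z} = 0$ in $\mathbb{Z}_d$, we get $\pi(t + id) = \pi(t) + i\,\pi(d) = \pi(t)$ for every $i \in \{0, 1, \ldots, k-1\}$. Hence every element of $A$ has the same image $\pi(t) = [t]_d$ under $\pi$; in other words, all elements of $A$ are congruent to $t$ modulo $d$, so their residue classes all lie in the single congruence class of $\mathbb{Z}_d$ determined by $t$. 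That is exactly the claimed statement.

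The one point that warrants care --- though I do not expect it to be a genuine obstacle --- is that $A$ is a \emph{set}, not a sequence, and (as the paper stresses) may be presented with several different bases and common differences. This causes no trouble: the images $\pi(a)$ for $a \in A$ are intrinsic to $A$ and independent of any chosen presentation, and the computation above shows this image is constant on $A$ for whichever presentation with step $d$ we pick. So the argument amounts to the functoriality of reduction modulo a divisor, and no harder step is needed.
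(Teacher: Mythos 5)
Your argument is correct and is essentially the paper's own proof in slightly more structural clothing: the paper writes an arbitrary $x \in A$ as $x = a + jd + tN$ and observes $x \equiv a \pmod d$ since $d \mid N$, which is exactly the well-definedness of your reduction map $\pi$ combined with $\pi(d) = 0$. Nothing further is needed.
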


\begin{proof}
    Let $A \subseteq \mathbb{Z}_{N}$ be a cyclic arithmetic progression with common difference $d$, and let $a \in \{0,1,\ldots,N-1\}$ be a representative of some element in $A$. Fix an arbitrary element $x \in A$. Then we can write $x \equiv a+jd \pmod N$ for some $j \in \mathbb{Z}$. Hence, there exists $t \in \mathbb{Z}$ such that $x = a+jd+tN$. Since $d$ divides $N$, this yields $x \equiv a+jd+tN \equiv a \pmod d$. So, $x \in [a]_d$. Since $x \in A$ was arbitrary, we see that $A \subseteq [a]_d$. That is, all residue classes of the elements of $A$ lie in a single congruence class of $\mathbb{Z}_{d}$.
\end{proof}

\begin{lemma}\label{lem:cons}
Fix $d \in D(mk,k)$, and suppose $A$ is a $k$-term cyclic arithmetic progression mod $mk$ with common difference $d$. Let $\beta \in \{0,1,\ldots,d-1\}$ such that $A \subseteq [\beta]_d$\footnote{The existence of such a $\beta$ is guaranteed by Lemma \ref{lem:3.1real}.}. Set $\alpha := d-\beta$, and define \[S := \{k-\alpha,2k-\alpha,\ldots,mk-\alpha\} \subseteq \mathbb{Z}_{mk}.\] Then $A$ contains $d$ consecutive elements of $S$; that is, there exists an index $j \in \{1,\ldots,m\}$ such that \[\{jk-\alpha,(j+1)k-\alpha,\ldots,(j+d-1)k-\alpha\} \pmod {mk} \subseteq A,\] where the indices $j,j+1,\ldots,j+d-1$ are taken cyclically modulo $m$.
\end{lemma}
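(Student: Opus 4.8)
The plan is to move both $A$ and $S$ into a single cyclic coset, in which $A$ becomes a block of consecutive elements and $S$ becomes an evenly spaced family, and then to count. Since $d \in D(mk,k)$, Lemma~\ref{prop:2.1} gives $d \mid k$ and $d \le m$; set $q := k/d \in \mathbb{Z}^{+}$, so $k = dq$ and $mk/d = mq$. As $d \mid k \mid mk$, Lemma~\ref{lem:3.1real} applies (this is the footnoted existence of $\beta$), and the coset $C := \{x \in \mathbb{Z}_{mk} : x \equiv \beta \pmod d\}$ equals $\beta + \langle d \rangle$; by Lemma~\ref{lem:1.1} it has exactly $mk/d = mq$ elements. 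I would then define $\varphi : \mathbb{Z}_{mq} \to C$ by $\varphi(\ell) := \beta + \ell d \pmod{mk}$, a well-defined bijection (well-definedness since $\beta + (\ell + mq)d \equiv \beta + \ell d \pmod{mk}$), which will serve as the common coordinate system.

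First I would compute $\varphi^{-1}(A)$ and $\varphi^{-1}(S)$. Writing $A = \{t + id \pmod{mk} : 0 \le i \le k-1\}$ with $t \in A \subseteq [\beta]_d$, so $t = \varphi(\ell_0)$ for a unique $\ell_0 \in \mathbb{Z}_{mq}$, one gets $t + id = \varphi(\ell_0 + i)$, hence $\varphi^{-1}(A) = \{\ell_0, \ell_0 + 1, \ldots, \ell_0 + dq - 1\}$ — an arc of $dq$ consecutive residues mod $mq$, which are genuinely distinct since $d \le m$. For $S$, the identity $jk - \alpha = \beta + d(jq - 1)$ shows $jk - \alpha = \varphi(jq - 1)$, so $\varphi^{-1}(S) = \{\, jq - 1 \bmod mq : 1 \le j \le m \,\}$, which is exactly the set of residues congruent to $-1$ modulo $q$ in $\mathbb{Z}_{mq}$, enumerated in $S$-order by $j$.

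The combinatorial core, which I would isolate as a sub-claim, is: every arc of $dq$ consecutive residues in $\mathbb{Z}_{mq}$ contains exactly $d$ residues $\equiv -1 \pmod q$, and they are of the form $s, s+q, \ldots, s+(d-1)q$. To prove it, split the arc $\{\ell_0, \ldots, \ell_0 + dq - 1\}$ into the $d$ blocks $\{\ell_0 + aq, \ldots, \ell_0 + aq + q - 1\}$ for $0 \le a \le d-1$; each block contains a unique residue $s_a \equiv -1 \pmod q$, and the location constraint forces $s_{a+1} - s_a \in \{1, \ldots, 2q-1\}$, which together with $q \mid (s_{a+1} - s_a)$ yields $s_{a+1} - s_a = q$, so $s_a = s_0 + aq$. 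Choosing $j \in \{1, \ldots, m\}$ with $jq \equiv s_0 + 1 \pmod{mq}$, we get $s_0 + aq \equiv (j+a)q - 1 \pmod{mq}$ for each $a$, hence $\varphi(s_0 + aq) = (j+a)k - \alpha$; applying $\varphi$ to $\{s_0, s_0+q, \ldots, s_0+(d-1)q\} \subseteq \varphi^{-1}(A)$ then yields $\{jk - \alpha, (j+1)k - \alpha, \ldots, (j+d-1)k - \alpha\} \pmod{mk} \subseteq A$ with indices cyclic mod $m$, which is the assertion.

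I expect the main obstacle to be purely organizational: one juggles three ambient sets — $\mathbb{Z}_{mk}$, the coset $C$, and the index group $\mathbb{Z}_{mq}$ — and every reduction must be taken modulo the correct quantity (in particular modulo $mq$, not $mk$, once inside the index group), with the degenerate case $d = m$ (where the arc is all of $\mathbb{Z}_{mq}$ and $S \subseteq A$) handled uniformly. No single step is deep; the genuine content is the ``exactly $d$ consecutive elements'' count, and the remaining work is verifying that $\varphi$ carries the common-difference-$d$ structure of $A$ onto the spacing structure of $S$.
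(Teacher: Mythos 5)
Your proof is correct, and it takes a genuinely different route from the paper's. The paper works directly in $\mathbb{Z}_{mk}$: after observing that $x \in S$ iff $x \equiv -\alpha \pmod k$, it writes $A = (a, a+d, \ldots, a+(k-1)d)$ and solves the linear congruence $dr \equiv -(\alpha + a) \pmod{k}$ for the position index $r$, showing it has exactly $d$ solutions $r_0, r_0 + t, \ldots, r_0 + (d-1)t$ modulo $k$ (with $t = k/d$); since $dt = k$, the corresponding terms of $A$ form a block of $d$ consecutive elements of $S$. You instead re-coordinatize: the bijection $\varphi : \mathbb{Z}_{mq} \to [\beta]_d$, $\varphi(\ell) = \beta + \ell d$, pulls $A$ back to an arc of $dq = k$ consecutive residues in $\mathbb{Z}_{mq}$ and pulls $S$ back to the full residue class $-1 \pmod q$, reducing the lemma to the elementary counting fact that an arc of $dq$ consecutive residues meets that class in exactly $d$ points spaced $q$ apart. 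The two arguments encode the same arithmetic, but the emphasis differs: the paper's congruence-solving is more economical and stays in the original ambient group, while your change-of-coordinates is more structural and makes the ``exactly $d$ consecutive hits'' phenomenon visually transparent (and handles the boundary case $d = m$, where the arc wraps around all of $\mathbb{Z}_{mq}$, without any special pleading). The one bookkeeping hazard you flag yourself -- juggling reductions modulo $mk$, $mq$, and $q$ -- is real but you navigate it correctly, in particular by doing the ``spacing $q$'' argument with integer representatives before reducing modulo $mq$.
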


\begin{proof}
    First, since $d \in D(mk,k) = \{g \in \mathbb{Z}^+ : g \leq m, g \mid k\}$, we have that $d$ divides $k$. So, $k \equiv 0 \pmod d$. Hence, for every $j \in \{1,\ldots,m\}$, we have \begin{equation*}
        jk -\alpha \equiv jk-(d-\beta) \equiv -d+\beta \equiv \beta \pmod d.
    \end{equation*} So, $S \subseteq [\beta]_d$. On a similar note, since $A \subseteq [\beta]_d$, we can write $A = (a,a+d,a+2d,\ldots,a+(k-1)d) \pmod {mk}$ for some initial term $a \equiv \beta \pmod d$.

    Now, let us show that an arbitrary $x \in \mathbb{Z}_{mk}$ lies in $S$ if and only if $x \equiv -\alpha \pmod k$. First, suppose $x \in S$. Then $x \equiv jk-\alpha \pmod {mk}$ for some $j \in \{1,\ldots,m\}$. Reducing mod $k$ yields $x \equiv -\alpha \pmod k$. Conversely, suppose $x \equiv -\alpha \pmod k$. Then there exists an integer $q$ such that $x+\alpha = qk$. Take $j \in \{1,\ldots,m\}$ such that $j \equiv q \pmod m$ (if $q \equiv 0 \pmod m$, then take $j=m$). Writing $q = j+lm$ for some integer $l$, we have \begin{equation*}
        x+\alpha = qk = (j+lm)k = jk+l(mk).
\end{equation*} So, $x \equiv jk-\alpha \pmod {mk}$. Hence, $x \in S$. So, we see that $x \in S$ if and only if $x \equiv -\alpha \pmod k$.

Next, consider the following equation:\begin{equation}\label{equation1}
    dr \equiv -(\alpha+a) \pmod k.
\end{equation} Since $d$ divides $k$, we can write $k=td$ for some $t \in \mathbb{Z}^+$. So, we can rewrite (\ref{equation1}) as \begin{equation}\label{equation2}
    dr \equiv -(\alpha + a) \pmod {td}.
\end{equation} Since $a \equiv \beta \pmod d$ and $\alpha = d-\beta$, we have that \begin{equation*}
    \alpha+a \equiv \beta +(d-\beta) \equiv d \equiv 0 \pmod d.
\end{equation*} Hence, (\ref{equation2}) is solvable in terms of $r$. 

Now, suppose $r_0$ is one such solution to (\ref{equation2}). Then all solutions modulo $td$ are $r_0,r_0+t,r_0+2t,\ldots,r_0+(d-1)t$. Indeed, for each $j \in \{0,1,\ldots,d-1\}$, \begin{equation*}
    d(r+jt) \equiv dr+j(dt) \equiv dr \equiv -(\alpha+a) \pmod {td}.
\end{equation*} Moreover, suppose $r_0+lt \equiv r_0+l't \pmod {td}$ for some integers $0 \leq l,l' \leq d-1$. Then \begin{equation*}
    (l-l')t \equiv 0 \pmod {td} \implies l-l' \equiv 0 \pmod d.
\end{equation*} Since $|l-l'| < d$, it follows that $l-l' = 0$, i.e. $l=l'$. Hence, the solutions $r_0,r_0+t,r_0+2t,\ldots,r_0+(d-1)t$ are distinct modulo $td$. That is, there are $d$ distinct solutions to (\ref{equation2}) modulo $td$.

Finally, for such a solution $r_0$, consider the corresponding elements of $A$: \begin{equation*}
    A' :=\{a+dr_0,a+d(r_0+t),a+d(r_0+2t),\ldots,a+d(r_0+(d-1)t)\} \pmod {mk}.
\end{equation*} By construction, the first element of $A'$ satisfies \begin{equation*}
    a+dr_0 \equiv a-(\alpha+a) \equiv -\alpha \pmod k.
\end{equation*} So, $a+dr_0 \in S$. Hence, we can write $a+dr_0 \equiv jk-\alpha \pmod {mk}$ for some $j \in \{1,\ldots,m\}$. Each subsequent element in $A'$ differs from the previous one by $dt = k$. So, reducing $A'$ modulo $mk$ yields \begin{equation*}
    jk-\alpha, (j+1)k-\alpha,\ldots,(j+d-1)k-\alpha \pmod {mk},
\end{equation*} with indices taken cyclically modulo $m$. Hence, there exists an index $j \in \{1,\ldots,m\}$ such that \begin{equation*}
    \{jk-\alpha,(j+1)k-\alpha,\ldots,(j+d-1)k-\alpha\} \pmod {mk} \subseteq A,
\end{equation*} where the indices $j,j+1,\ldots,j+d-1$ are taken cyclically modulo $m$.
\end{proof}

\begin{lemma}
\label{lem:nonemptycon}
    Let $d,N \in \mathbb{Z}^+$ such that $d \leq N$. For each $i \in \mathbb{Z}_{N}$, define $X_{i} := \{i,i+1,\ldots,i+d-1\} \pmod N$. Let $C = \{d-1,d,d+1,\ldots,N-1\}$. Then $X_i \cap C \neq \emptyset$ for all $i \in \mathbb{Z}_N$.
\end{lemma}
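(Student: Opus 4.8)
The plan is to prove this by a short counting (pigeonhole) argument that uses only the hypothesis $d \leq N$. The key observation is that $C$ is almost all of $\mathbb{Z}_N$, while $X_i$ is a block of $d$ consecutive residues that is simply too large to squeeze into the small complement $\mathbb{Z}_N \setminus C$.

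First I would pin down the size of $C$. Since $C = \{d-1, d, d+1, \ldots, N-1\}$ is the set of integers from $d-1$ through $N-1$, it has $N - (d-1) = N - d + 1$ elements, so its complement in $\mathbb{Z}_N$ is $\mathbb{Z}_N \setminus C = \{0, 1, \ldots, d-2\}$, a set of exactly $d - 1$ elements. Next I would verify that $|X_i| = d$ for every $i \in \mathbb{Z}_N$: the set $X_i$ is the image in $\mathbb{Z}_N$ of the $d$ integers $i, i+1, \ldots, i+d-1$, and any two of these integers differ by a nonzero amount of absolute value at most $d-1$, which is strictly less than $N$ precisely because $d \leq N$; hence they reduce to $d$ distinct residues mod $N$.

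Combining the two counts finishes the argument: $|X_i| = d > d-1 = |\mathbb{Z}_N \setminus C|$, so $X_i$ cannot be contained in $\mathbb{Z}_N \setminus C$, and therefore $X_i \cap C \neq \emptyset$. Since $i$ was arbitrary, this holds for all $i \in \mathbb{Z}_N$, as claimed.

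I do not expect a genuine obstacle here; the only point that needs care is the equality $|X_i| = d$, which breaks down without the assumption $d \leq N$ (in which case $X_i$ would wrap around and repeat residues). A more hands-on alternative would split into cases on $i$: if $i \geq d-1$ then $i \in X_i \cap C$ immediately, while if $0 \leq i \leq d-2$ one checks that the residue $d-1$ lies in $X_i$. That route, however, forces one to track the cyclic wraparound of $X_i$ when $i + d - 1 \geq N$ — exactly the bookkeeping the pigeonhole argument sidesteps — so I would present the counting proof.
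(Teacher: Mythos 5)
Your proof is correct and is essentially the same counting argument the paper uses: the paper phrases it as a proof by contradiction (assume $X_i \cap C = \emptyset$, deduce $d \leq d-1$), while you state the inequality $|X_i| = d > d-1 = |\mathbb{Z}_N \setminus C|$ directly, and you add a brief justification for $|X_i| = d$ that the paper leaves implicit. These differences are purely presentational.
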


\begin{proof}
    Suppose towards contradiction there exists $i \in \mathbb{Z}_N$ such that $X_i \cap C = \emptyset$. Then $X_i \subseteq \mathbb{Z}_{N} \setminus C$. Hence, $|X_i| \leq |\mathbb{Z}_{N} \setminus C|$. But $|X_i| = |\{i,i+1,\ldots,i+d-1\}| =d$ and $|\mathbb{Z}_N \setminus C| = |\{0,1,2,\ldots,d-2\}| = d-1$. So, $d \leq d-1$, a contradiction. Hence, $X_i \cap C \neq \emptyset$ for all $i \in \mathbb{Z}_N$.
\end{proof}

\subsection{Construction of $F$}

Let us now construct $F \subseteq \mathbb{Z}_{mk}$ as follows:

\begin{construction}
\label{cons:3.3}
Let $D(mk,k) = \{d_0,d_1,\ldots,d_{j}\}$, $d_0 < d_1 < \cdots < d_{j}$, for some $j \geq 0$, and take $d_{-1} := 0$. For each $i \in \{0,\ldots,j\}$, define \begin{equation*}
    F_{i} := \bigcup_{\alpha = d_{i-1}+1}^{d_i}\{d_{i}k-\alpha,(d_{i}+1)k-\alpha,\ldots,mk-\alpha\}.
\end{equation*} Then take $F := \bigcup_{i=0}^{j}F_{i}$.
    
\end{construction}

\begin{example}
    Consider $D(3k,k) = \{1,3\}$ (and suppose $k=15$). We have \begin{align*}
        F_{0} &= \bigcup_{\alpha=1}^{1}\{k-\alpha,2k-\alpha,3k-\alpha\} = \{k-1,2k-1,3k-1\} \\
        F_{1} &= \bigcup_{\alpha=2}^{3}\{3k-\alpha\} = \{3k-2,3k-3\}.
    \end{align*} So, \begin{align*}
        F = F_{0} \cup F_{1} = \{k-1,2k-1,3k-3,3k-2,3k-1\}.
    \end{align*} Plugging in $k = 15$ yields $F = \{14,29,42,43,44\}$.
\end{example}

\begin{example}
    Consider $D(10k,k) = \{1,3,9\}$ (and suppose $k=9$). We have \begin{align*}
        F_{0} &= \bigcup_{\alpha=1}^{1}\{k-\alpha,2k-\alpha,\ldots,10k-\alpha\} = \{k-1,2k-1,\ldots,10k-1\} \\
        F_{1} &= \bigcup_{\alpha=2}^{3}\{3k-\alpha,4k-\alpha,\ldots,10k-\alpha\} = \{3k-2,\ldots,10k-2\}\cup \{3k-3,\ldots,10k-3\} \\
        F_{2} &= \bigcup_{\alpha=4}^{9}\{9k-\alpha,10k-\alpha\} = \{9k-4,10k-4\} \cup \cdots \cup \{9k-9,10k-9\}.
    \end{align*} So, $F = F_{0} \cup F_{1} \cup F_{2}$. Plugging in $k=9$ yields $F = \{8,17,24,25,26,33,34,35,42,43,44,51,52,$ \\ $53,60,61,62,69,70,71,72,73,74,75,76,77,78, 79,80,81,82,83,84,85,86,87,88,89\}$.
\end{example}

\begin{lemma}\label{lem:disjointf}
    Using the same notation as Construction \ref{cons:3.3}, we have that $F_{i_1} \cap F_{i_2} = \emptyset$ for all $i_{1},i_{2} \in \{0,1,\ldots,j\}$ such that $i_1 \neq i_2$.
\end{lemma}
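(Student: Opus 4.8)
The plan is to distinguish the elements of the sets $F_i$ by their residues modulo $k$. First I would record the elementary observation that, by Construction \ref{cons:3.3}, every element of $F_i$ has the form $\ell k - \alpha \pmod{mk}$ with $d_i \le \ell \le m$ and $d_{i-1}+1 \le \alpha \le d_i$; reducing modulo $k$ therefore gives
\[
F_i \subseteq \bigcup_{\alpha = d_{i-1}+1}^{d_i} \big\{\, x \in \mathbb{Z}_{mk} : x \equiv -\alpha \pmod k \,\big\}.
\]
In particular, the set of residues modulo $k$ represented in $F_i$ is contained in $\{\, -\alpha \bmod k : d_{i-1}+1 \le \alpha \le d_i \,\}$.

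Next I would note that all the shift parameters $\alpha$ occurring across the various $F_i$ lie in the range $\{1, 2, \ldots, k\}$: by Lemma \ref{prop:2.1} every $d_i \in D(mk,k)$ divides $k$, so $d_j \le k$, and any $\alpha$ appearing in $F_i$ satisfies $1 \le \alpha \le d_i \le d_j \le k$. On the set $\{1, \ldots, k\}$ the map $\alpha \mapsto (-\alpha \bmod k)$ is injective — indeed it is a bijection onto $\{0, 1, \ldots, k-1\}$ — so two integers in this range that are congruent modulo $k$ are equal. Combining this with the previous step: if some $x \in \mathbb{Z}_{mk}$ lies in both $F_{i_1}$ and $F_{i_2}$, then the witnessing shifts $\alpha_1 \in \{d_{i_1-1}+1,\ldots,d_{i_1}\}$ and $\alpha_2 \in \{d_{i_2-1}+1,\ldots,d_{i_2}\}$ satisfy $-\alpha_1 \equiv -\alpha_2 \pmod k$, hence $\alpha_1 = \alpha_2$.

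Finally I would invoke the strict monotonicity $0 = d_{-1} < d_0 < d_1 < \cdots < d_j$: for $i_1 < i_2$ this gives $d_{i_1} \le d_{i_2-1}$, so the integer intervals $\{d_{i_1-1}+1,\ldots,d_{i_1}\}$ and $\{d_{i_2-1}+1,\ldots,d_{i_2}\}$ are disjoint. Since a common element of $F_{i_1}$ and $F_{i_2}$ would force a shift lying in both intervals, no such element exists, i.e. $F_{i_1}\cap F_{i_2}=\emptyset$. I do not anticipate a genuine obstacle here; the only point requiring a little care is the boundary case $d_i = k$ (possible when $k \le m$), where the shift $\alpha = k$ contributes the residue class $0 \pmod k$ rather than a class in $\{1,\ldots,k-1\}$. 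This is precisely why the natural domain for the injectivity claim is the closed range $\{1,\ldots,k\}$ rather than $\{1,\ldots,k-1\}$, and with that choice the argument goes through without modification.
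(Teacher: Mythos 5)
Your proposal is correct and is essentially the same argument as the paper's: both reduce an element $\ell k - \alpha$ modulo $k$ to recover $-\alpha$, then use that the shift ranges $\{d_{i-1}+1,\ldots,d_i\}$ are pairwise disjoint subintervals of $\{1,\ldots,k\}$ to force a contradiction. The paper phrases this as a direct algebraic step — writing $\big((d_{i_2}+t_2)-(d_{i_1}+t_1)\big)k = \alpha_2 - \alpha_1$ and noting the right side lies in $[1,k-1]$, hence is not a multiple of $k$ — whereas you phrase it as injectivity of $\alpha \mapsto -\alpha \bmod k$ on $\{1,\ldots,k\}$; these are the same observation.
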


\begin{proof}
    Fix $i_{1},i_{2} \in \{0,1,\ldots,j\}$, and without loss of generality, assume $i_1 < i_2$. Then \begin{align*}
        F_{i_1} &= \bigcup_{\alpha = d_{i_1-1}+1}^{d_{i_1}}\{d_{i_1}k-\alpha,(d_{i_1}+1)k-\alpha,\ldots,mk-\alpha\} \qquad \text{and} \\
        F_{i_2} &= \bigcup_{\alpha = d_{i_2-1}+1}^{d_{i_2}}\{d_{i_2}k-\alpha,(d_{i_2}+1)k-\alpha,\ldots,mk-\alpha\},
    \end{align*} where $d_{i_{1}-1},d_{i_1},d_{i_{2}-1},d_{i_2} \in D(mk,k)$.
    
    Let $x_1 \in F_{i_1}$ and $x_{2} \in F_{i_2}$. Then there exists $\alpha_1 \in \{d_{{i_1}-1}+1,\ldots,d_{i_1}\}$, $\alpha_2 \in \{d_{{i_2}-1}+1,\ldots,d_{i_{2}}\}$, $t_1 \in \{0,\ldots,m-d_{i_1}\}$, and $t_2 \in \{0,\ldots,m-d_{i_2}\}$ such that \begin{equation*}
        x = (d_{i_1}+t_1)k-\alpha_1 \quad \text{and} \quad y=(d_{i_2}+t_2)k-\alpha_2.
    \end{equation*} Suppose towards contradiction that $x_1 = x_2$. Then \begin{equation*}
        (d_{i_1}+t_1)k-\alpha_1 = (d_{i_2}+t_2)k-\alpha_2,
    \end{equation*} implying that \begin{equation}\label{equation3}
        \Big((d_{i_2}+t_2)-(d_{i_1}+t_1)\Big)k = \alpha_2-\alpha_1.
    \end{equation} Now, the left-hand side of (\ref{equation3}) is divisible by $k$. On the other hand, since $i_1 < i_2$, we have that $\alpha_1 \leq d_{i_1} < d_{i_2-1}+1 \leq \alpha_2$, so that $\alpha_2 - \alpha_1 \geq (d_{{i_2}-1}+1)-d_{i_1} \geq 1$. Similarly, we have that $\alpha_2 \leq d_{i_2}$ and $\alpha_1 \geq d_{i_{1}-1} +1$, so that $\alpha_2 - \alpha_1 \leq d_{i_2}-(d_{i_{1}-1}+1) = d_{i_2}-d_{i_{1}-1}-1 \leq k-1$, since $d_{i_1},d_{i_2} \in D(mk,k) \subseteq \{1,2,\ldots,k\}$. Hence, \begin{equation*}
        1 \leq \alpha_2-\alpha_1 \leq k-1,
    \end{equation*} so the right-hand side of (\ref{equation3}) is not divisible by $k$, a contradiction. Hence, $x_1 \neq x_2$. Since $x_1 \in F_{i_1}$ and $x_2 \in F_{i_2}$ were arbitrary, we see that $F_{i_1} \cap F_{i_{2}} = \emptyset$.
\end{proof}

Now, we have the following:

\begin{proposition}
\label{prop:sizeoff}
    Construct $F \subseteq \mathbb{Z}_{mk}$ via Construction \ref{cons:3.3}. Using the same notation as above, we have \begin{align}
        |F| = \sum_{i=0}^{j}(d_i-d_{i-1})(m-d_i+1).
    \end{align}
\end{proposition}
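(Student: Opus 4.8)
The plan is to compute $|F|$ by first counting each $|F_i|$ individually, then summing, using Lemma \ref{lem:disjointf} to justify that the sum of sizes equals the size of the union. So the first step is to fix $i \in \{0,1,\ldots,j\}$ and write
\[
F_i = \bigcup_{\alpha = d_{i-1}+1}^{d_i} \{d_i k - \alpha, (d_i+1)k-\alpha,\ldots,mk-\alpha\}.
\]
For a fixed $\alpha$ in the index range, the inner set $\{d_i k - \alpha,\ldots,mk-\alpha\}$ consists of the elements $(d_i + t)k - \alpha$ for $t \in \{0,1,\ldots,m-d_i\}$, so it has exactly $m - d_i + 1$ elements, and these are genuinely distinct in $\mathbb{Z}_{mk}$ since consecutive ones differ by $k$ and there are at most $m$ of them (so no wraparound collision); this is essentially the same spacing argument as in Lemma \ref{lem:disjointf}. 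There are $d_i - (d_{i-1}+1) + 1 = d_i - d_{i-1}$ values of $\alpha$.

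The second step is to observe that the inner sets for distinct values of $\alpha$ are themselves disjoint: if $(d_i + t)k - \alpha = (d_i + t')k - \alpha'$ with $\alpha,\alpha'$ in the index range, then $(t-t')k = \alpha - \alpha'$, and since $0 < |\alpha - \alpha'| \le d_i - d_{i-1} - 1 \le k - 1 < k$ (using $D(mk,k) \subseteq \{1,\ldots,k\}$), the right side cannot be a nonzero multiple of $k$, forcing $\alpha = \alpha'$. Therefore $F_i$ is a disjoint union of $d_i - d_{i-1}$ sets each of size $m - d_i + 1$, giving $|F_i| = (d_i - d_{i-1})(m - d_i + 1)$. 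The third step is to invoke Lemma \ref{lem:disjointf}, which says the $F_i$ are pairwise disjoint, so that
\[
|F| = \left|\bigcup_{i=0}^{j} F_i\right| = \sum_{i=0}^{j} |F_i| = \sum_{i=0}^{j}(d_i - d_{i-1})(m - d_i + 1),
\]
which is the claimed formula.

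The main obstacle — really the only nonroutine point — is making sure all the internal non-collision claims are airtight: that within a single inner set the $m - d_i + 1$ listed residues are distinct mod $mk$, and that the inner sets for different $\alpha$ within the same $F_i$ don't overlap. Both reduce to the same observation that a difference of the form $(\text{small integer}) \cdot k$ equals a nonzero residue strictly between $-(k-1)$ and $k-1$ only if that residue is $0$; one must be slightly careful that the ranges of $t$ and of $\alpha$ are what the construction says they are, in particular that $d_i \le m$ (which holds by Lemma \ref{prop:2.1}) so the index set for $t$ is nonempty and of the stated size. Everything else is bookkeeping, and the cross-$F_i$ disjointness is already handed to us by Lemma \ref{lem:disjointf}.
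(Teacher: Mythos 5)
Your proof is correct and follows essentially the same route as the paper's: compute each $|F_i|$ as a disjoint union of $d_i - d_{i-1}$ inner sets of size $m - d_i + 1$, then sum using Lemma \ref{lem:disjointf}. You merely spell out the ``Clearly''-labeled disjointness of the inner sets (and the non-wraparound argument) in more detail than the paper does, which is fine.
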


\begin{proof}
    Fix $d_i \in D(mk,k)$ for some $i \in \{0,1,\ldots,j\}$. Clearly, \[\{d_ik-\alpha_1,(d_i+1)k-\alpha_1,\ldots,mk-\alpha_1\} \cap \{d_ik-\alpha_2,(d_i+1)k-\alpha_2,\ldots,mk-\alpha_2\} = \emptyset\] for any $d_{i-1} < \alpha_1,\alpha_2 \leq d_i$ such that $\alpha_1 \neq \alpha_2$. So, \begin{align*}
        |F_i| &= \Big|\bigcup_{\alpha=d_{i-1}+1}^{d_i}\{d_ik-\alpha,(d_i+1)k-\alpha,\ldots,mk-\alpha\}\Big| \\
        &= \sum_{\alpha=d_{i-1}+1}^{d_i}|\{d_{i}k-\alpha,(d_i+1)k-\alpha,\ldots,mk-\alpha\}| \\
        &= \sum_{\alpha=d_{i-1}+1}^{d_i}(m-d_i+1) = (d_{i}-d_{i-1})(m-d_{i}+1).
    \end{align*} Finally, $F_{i_1} \cap F_{i_2} = \emptyset$ for all $i_1 \neq i_2$ by Lemma \ref{lem:disjointf}. Hence, $|F| = \sum_{i=0}^{j}(d_i-d_{i-1})(m-d_i+1)$.
\end{proof}

\subsection{Proof of Theorem \ref{maintheorem}}

We are now able to prove Theorem \ref{maintheorem}. For ease of notation, let $[x,y] := \{x,x+1,\ldots,y\}$ for integers $x,y$ such that $x<y$, and let $[x,y] \pmod N := \{x,x+1,\ldots,y\} \pmod N$ for integers $x,y$ and $N \in \mathbb{Z}^+$.

\begin{proof}[Proof of Theorem \ref{maintheorem}] First, Proposition \ref{prop:1.1} yields $mk - m \geq b(mk,k)$. On the other hand, let $D(mk,k) = \{d_0,d_1,\ldots,d_{j}\}$, $d_0 < d_1 < \cdots < d_{j}$, for some $j \geq 0$, and let $d_{-1} := 0$. We will show that if we construct $F$ via Construction \ref{cons:3.3}, then $B := \mathbb{Z}_{mk} \setminus F$ will not contain any $k$-term cyclic arithmetic progressions mod $mk$. Then, the definition of $b(mk,k)$ and Proposition \ref{prop:sizeoff} will yield \begin{equation*}
    b(mk,k) \geq |B| = |\mathbb{Z}_{mk} \setminus F| = mk-\sum_{i=0}^{j}(d_i-d_{i-1})(m-d_{i}+1).
\end{equation*}

Fix $i \in [0,j]$, and suppose $A$ is a $k$-term cyclic arithmetic progression mod $mk$ with common difference $d_i$. Modulus $d_i$ admits exactly $d_i$ congruence classes, namely $[0]_{d_i},[1]_{d_{i}},\ldots,[d_{i}-1]_{d_{i}}$. By Lemma \ref{lem:3.1real}, there exists a unique $\beta \in [0,d_i-1]$ such that $A \subseteq [\beta]_{d_{i}}$. Consider this $\beta$.

By Lemma \ref{lem:cons}, $A$ contains exactly $d_{i}$ consecutive elements of $S := \{k-\alpha_\beta,2k-\alpha_\beta,\ldots,mk-\alpha_\beta\} \subseteq \mathbb{Z}_{mk}$, where $\alpha_\beta := d_{i} - \beta$. Let $S_A \subseteq S$ denote this consecutive subset. That is, there exists an index $j \in \{1,\ldots,m\}$ such that \begin{equation*}
    S_A := \{jk-\alpha_\beta,(j+1)k-\alpha_\beta,\ldots,(j+d_i-1)k-\alpha_\beta\} \pmod {mk}.
\end{equation*}

Now, there is a clear bijection between $\mathbb{Z}_m$ and $S$, namely \begin{equation*}
    \Phi : \mathbb{Z}_m \to S, \quad \Phi(j) = (j+1)k-\alpha_\beta.
\end{equation*} So, $S_{A} = \Phi([j-1,j+d_i-2] \pmod m)$. Put in another way, $S_A = \Phi([l,l+d_i-1] \pmod m)$ for $l=j-1$.

Let $C := \Phi([d_i-1,m-1]) = \{d_ik-\alpha_\beta,(d_{i}+1)k-\alpha_\beta,\ldots,mk-\alpha_\beta\}$. Then Lemma \ref{lem:nonemptycon} yields that $C \cap S_A \neq \emptyset$. That is, $C$ contains at least one term of $A$ (specifically, $C$ contains at least one element of $A$ no matter what $j \in [1,m]$ is). So, if $C \subseteq F$, then $B$ cannot contain all elements of $A$ and hence does not contain $A$. 

Now, $\alpha_\beta = d_i-\beta \in [1,d_i]$, since $\beta \in [0,d_i-1]$. Since $[1,d_i] = \bigcup_{q=0}^{j}[d_{q-1}+1,d_q]$, there exists a unique $q \in [0,i]$ such that $\alpha_\beta \in [d_{q-1}+1,d_q]$. And since $d_0 < d_1 < \cdots < d_{j}$, we have that $d_q \leq d_i$. So,
\begin{align*}
    C  \subseteq \bigcup_{\alpha=d_{q-1}+1}^{d_{q}}\{d_qk-\alpha,(d_q+1)k-\alpha,\ldots,mk-\alpha\} = F_{q} \subseteq F.
\end{align*}

Hence, $B$ does not contain all elements of $A$, so $B$ does not contain $A$. Since $\beta \in [0,d_i-1]$ was arbitrary, a similar argument applied to any $\beta \in [0,d_i-1]$ shows that $B$ does not contain any progressions contained in any congruence class of $d_i$. So, $B$ does not contain any $k$-term cyclic arithmetic progressions mod $mk$ with common difference $d_i$. Finally, since $i \in [0,j]$ was arbitrary, a similar argument applied to any $i \in [0,j]$ shows that $B$ does not contain any $k$-term cyclic arithmetic progressions mod $mk$ with any common difference in $D(mk,k)$. So, by Lemma \ref{lem:1.2}, we have that $B$ does not contain any $k$-term arithmetic progressions mod $mk$. Hence, $b(mk,k) \geq |B|$ and our theorem is proven.
\end{proof}

\subsection{Examples \& Applications}

\begin{example}
    Let $m=3$ and $k=4$. Then $D(mk,k) = D(12,4) = \{1,2\}$ and Theorem \ref{maintheorem} yields $9 \geq b(12,4) \geq 7$.
\end{example}

\begin{example}
    Let $m=k=9$. Then $D(mk,k) = D(81,9) = \{1,3,9\}$ and Theorem \ref{maintheorem} yields $72 \geq b(81,9) \geq 52$.
\end{example}

As immediate consequences of Theorem \ref{maintheorem}, we obtain alternative proofs of two results from \cite{Berglund}:

\begin{proposition*}[Partial Restatement]
    If $D(mk,k) = \{1\}$, then $b(mk,k) = mk-m$.
\end{proposition*}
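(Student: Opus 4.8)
The plan is to obtain the claim as a one-line specialization of Theorem~\ref{maintheorem} together with the unconditional upper bound already available. First, Proposition~\ref{prop:1.1} gives $b(mk,k) \le mk - m$ for every $m > 0$ with no hypothesis on $D(mk,k)$, so only the matching lower bound requires attention.

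For that lower bound, I would invoke Theorem~\ref{maintheorem} in the case $D(mk,k) = \{1\}$. Here $j = 0$ and $d_0 = 1$, while by the convention of the theorem $d_{-1} := 0$; hence the sum $\sum_{i=0}^{j}(d_i - d_{i-1})(m - d_i + 1)$ consists of the single term $(d_0 - d_{-1})(m - d_0 + 1) = (1 - 0)(m - 1 + 1) = m$. Theorem~\ref{maintheorem} therefore reads $b(mk,k) \ge mk - m$.

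Combining the two inequalities yields $b(mk,k) = mk - m$, as claimed. There is essentially no obstacle here: the only points to verify are that the sum in Theorem~\ref{maintheorem} collapses to the single indicated term and that $m - d_0 + 1 = m$ when $d_0 = 1$, both of which are immediate. (For context one could additionally note, via Lemma~\ref{prop:2.1}, that the hypothesis $D(mk,k) = \{1\}$ holds precisely when $k$ has no divisor $d$ with $1 < d \le m$ — in particular whenever $m$ is smaller than the least prime factor of $k$ — but this characterization is not needed for the argument.)
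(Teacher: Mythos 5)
Your proposal is correct and follows essentially the same route as the paper: specialize Theorem~\ref{maintheorem} to $D(mk,k)=\{1\}$, observe that the sum collapses to the single term $(1-0)(m-1+1)=m$, and combine the resulting lower bound $b(mk,k)\ge mk-m$ with the upper bound $b(mk,k)\le mk-m$ (which the paper reads off the same theorem, while you cite Proposition~\ref{prop:1.1} directly — an immaterial difference since the theorem's upper bound is itself Proposition~\ref{prop:1.1}).
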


\begin{proof}
    Let $D(mk,k) = \{1\}$. Then Theorem \ref{maintheorem} yields \begin{align*}
        mk-m \geq b(mk,k) \geq mk-(1-0)(m-1+1) = mk-m,
    \end{align*} implying that $b(mk,k) = mk-m$.
\end{proof}

\begin{remark}
Note that the proof of Theorem \ref{maintheorem} relies only on the first part of Proposition \ref{prop:1.1}, specifically the inequality $b(mk,k) \leq mk-m$. Hence, we are justified to use Theorem \ref{maintheorem} to establish the second half of Proposition \ref{prop:1.1}.
\end{remark}

\begin{proposition}
    If $p \geq 3$ is prime, then $(p-1)^2 \leq b(p^2,p) \leq p(p-1)$.
\end{proposition}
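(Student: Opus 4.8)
The plan is to obtain both inequalities as an immediate specialization of Theorem \ref{maintheorem} to the case $m = k = p$, so that $mk = p^2$. The first step is to pin down the set $D(p^2,p)$ using Lemma \ref{prop:2.1}, which gives $D(p^2,p) = \{g \in \mathbb{Z}^+ : g \leq p,\ g \mid p\}$. Since $p$ is prime, its only positive divisors are $1$ and $p$, and both satisfy $g \leq p = m$; hence $D(p^2,p) = \{1,p\}$. In the notation of Theorem \ref{maintheorem} this means $j = 1$, $d_0 = 1$, $d_1 = p$, and $d_{-1} = 0$.

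With these values in hand, the second step is simply to substitute into the two-sided estimate of Theorem \ref{maintheorem}. The upper bound reads $b(p^2,p) \leq mk - m = p^2 - p = p(p-1)$. For the lower bound, the relevant sum is
\begin{equation*}
\sum_{i=0}^{1}(d_i - d_{i-1})(m - d_i + 1) = (1-0)(p - 1 + 1) + (p-1)(p - p + 1) = p + (p-1) = 2p-1,
\end{equation*}
so Theorem \ref{maintheorem} yields $b(p^2,p) \geq p^2 - (2p-1) = (p-1)^2$. Combining the two gives $(p-1)^2 \leq b(p^2,p) \leq p(p-1)$.

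There is no genuine obstacle here: the whole argument reduces to identifying $D(p^2,p)$ and performing a short arithmetic simplification. Primality of $p$ is exactly what guarantees that the only common differences one must contend with (up to taking $\gcd$ with $k$) are $1$ and $p$ itself. The single point worth a sentence of care is confirming that $p$ really does belong to $D(p^2,p)$ — that is, that the constraint $g \leq m$ is met — which holds with equality since $m = p$; equivalently, $(0, p, 2p, \ldots, (p-1)p)$ is a bona fide $p$-term cyclic arithmetic progression mod $p^2$ with common difference $p$.
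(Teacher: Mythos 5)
Your proposal is correct and follows essentially the same route as the paper: identify $D(p^2,p)=\{1,p\}$ via Lemma \ref{prop:2.1} and substitute into Theorem \ref{maintheorem} to obtain both bounds. The only (cosmetic) difference is that the paper invokes Lemma \ref{lem:2.1} for the upper bound, whereas you take it directly from the theorem; your indexing $d_0=1$, $d_1=p$ is in fact cleaner than the paper's.
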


\begin{proof}
   Let $p \geq 3$ be prime. Observe that $b(p^2,p)$ corresponds to the case of $b(mk,k)$ in which $m=k=p$ is prime. So, $D(mk,k) = D(p^2,p) = \{g \in \mathbb{Z}^+ : g \leq p^2, g \mid p\} = \{1,p\}$. That is, $|D(p^2,p)| > 1$, since $p \geq 3$. So, Lemma \ref{lem:2.1} yields $b(p^2,p) = b(mk,k) \leq mk-m = p^2-p = p(p-1)$.

   On the other hand, letting $d_{-1} := 0, d_{1} := 1,$ and $d_2 := p$, Theorem \ref{maintheorem} yields \begin{align*}
       b(p^2,p) &\geq p^2-\sum_{i=0}^{2}(d_i-d_{i-1})(p-d_i+1) \\
       &= p^2-\Big((1-0)(p-1+1)+(p-1)(p-p+1) \Big ) \\
       &= p^2-\Big(p+(p-1)\Big) \\
       &= p^2-2p+1 \\
       &= (p-1)^2.
   \end{align*} All together, we see that $(p-1)^2 \leq b(p^2,p) \leq p(p-1)$.
\end{proof}

\section{Implications of our Bounds}\label{sec:implications}
Recall from Section \ref{sec:1.2} that $b(mk,k)$ is the independence number of the cyclic Van der Waerden hypergraph $H_{mk,k}$, where $m,k \in \mathbb{Z}^+$ with $k \geq 3$.

In this section, we use our constructions of $B,F \subseteq \mathbb{Z}_{mk}$ to tightly bound chromatic number $\chi(mk,k)$, thereby providing a lower bound of the cyclic Van der Waerden number $W_c(k,r)$ for various $r \geq 2$.

\subsection{Chromatic Number $\chi(mk,k)$}
For integers $m,k \in \mathbb{Z}^+$ with $k \geq 3$, recall that the \textit{chromatic number} $\chi(mk,k)$ of $\mathbb{Z}_{mk}$ is the minimum number of subsets necessary to partition $\mathbb{Z}_{mk}$ such that no subset contains any $k$-term cyclic arithmetic progressions mod $mk$. In this section, we tightly bound $\chi(mk,k)$ in various cases based on the relationship between $m$ and $k$.

First, for $N \in \mathbb{Z}^+$, let us define the \textit{cyclic difference} between two elements $\alpha,\beta \in \mathbb{Z}_{N}$, $\alpha < \beta$, to be $(\alpha-\beta) \pmod N$. For example, the cyclic difference between $2$ and $7$ in $\mathbb{Z}_8$ is $3$.

Also, for an arbitrary progression $(a_0,a_1,\ldots,a_j) \subseteq \mathbb{Z}_N$, let $\sigma(a_0,a_1,\ldots,a_j) = (a_{\sigma(0)},a_{\sigma(1)},\ldots,a_{\sigma(j)})$ be the permutation such that $a_{\sigma(0)} < a_{\sigma(1)} < \cdots < a_{\sigma(j)}$, with the order $<$ being taken in $\mathbb{Z}$. We say that the progression $(a_0,a_1,\ldots,a_j)$ \textit{cycles} if $\sigma$ is not the identity permutation, i.e. there is some $i$ such that $\sigma(i) \neq i$.

\begin{example}
    In $\mathbb{Z}_{8}$, $\sigma(1,3,5) = (1,3,5)$ and $\sigma(5,7,1) = (1,5,7)$. So, $(1,3,5)$ does not cycle while $(5,7,1)$ does.
\end{example}

We are now ready for our first proposition:

\begin{proposition}\label{prop:4.1new}
    Let $m \in \mathbb{Z}^+$. If $k > m$, then $\chi(mk,k) = 2$.
\end{proposition}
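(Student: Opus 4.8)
The plan is to establish the two inequalities $\chi(mk,k) \geq 2$ and $\chi(mk,k) \leq 2$ separately. The lower bound is essentially trivial: since $k \leq mk$, the progression $(0,1,\ldots,k-1)$ is a genuine $k$-term cyclic arithmetic progression mod $mk$, so $\mathbb{Z}_{mk}$ itself contains an edge of $H_{mk,k}$ and cannot be a single independent set; thus at least two parts are needed. The substance of the proposition is the upper bound, i.e.\ exhibiting a partition of $\mathbb{Z}_{mk}$ into two sets, each avoiding all $k$-term cyclic arithmetic progressions mod $mk$.

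For the upper bound, the natural idea is to use the construction already developed: take $F \subseteq \mathbb{Z}_{mk}$ from Construction \ref{cons:3.3} and $B := \mathbb{Z}_{mk} \setminus F$, which by the proof of Theorem \ref{maintheorem} contains no $k$-term cyclic AP mod $mk$. It then remains to check that $F$ itself also contains no such progression. The key leverage here is the hypothesis $k > m$: by Lemma \ref{prop:2.1}, $D(mk,k) = \{g \in \mathbb{Z}^+ : g \leq m,\ g \mid k\}$, and since every such $g$ is at most $m < k$, all elements of $D(mk,k)$ are strictly less than $k$. Moreover, each $F_i$ in Construction \ref{cons:3.3} is built from blocks $\{d_ik-\alpha,(d_i+1)k-\alpha,\ldots,mk-\alpha\}$; reducing mod $k$, every element of $F_i$ lies in the residue class $-\alpha \pmod k$ with $\alpha \in [d_{i-1}+1,d_i] \subseteq [1,m]$, so $F$ is contained in the union of residue classes $\{-1,-2,\ldots,-m\} \pmod k$, which is a proper subset of $\mathbb{Z}_k$ because $m < k$. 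The crucial observation is then that a $k$-term cyclic AP mod $mk$ with common difference $d \in D(mk,k)$, by Lemma \ref{lem:3.1real}, lies entirely in one congruence class of $\mathbb{Z}_d$; but since $d \mid k$ and the progression has $k = (k/d)\cdot d$ terms all distinct mod $mk$, those terms hit every residue class of $\mathbb{Z}_k$ that is $\equiv \beta \pmod d$ — in particular it cannot avoid the residue classes $\{0,1,\ldots,-m-1\}\pmod k$ that $F$ misses. Concretely: a common-difference-$d$ progression, reduced mod $k$, cycles through all $k/d$ residues in its class mod $d$, and since $k/d > m/d \geq 1$... one must argue it contains a residue mod $k$ outside $\{-1,\ldots,-m\}$, hence an element outside $F$.

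The cleanest way to close this is probably the following: suppose $A$ is a $k$-term cyclic AP mod $mk$ with common difference $d \in D(mk,k)$ and $A \subseteq F$. Reducing mod $k$, the elements of $A$ run through all residues in $[\beta]_d \cap \mathbb{Z}_k$ for the appropriate $\beta$; there are $k/d$ such residues, equally spaced by $d$, namely $\beta, \beta+d, \ldots, \beta+(k/d-1)d$. But $A \subseteq F$ forces each of these residues to lie in $\{k-1, k-2, \ldots, k-m\} \pmod k = \{k-m,\ldots,k-1\}$, an interval of length $m$. Since $d \leq m < k$, consecutive residues in our arithmetic progression mod $k$ differ by $d \leq m$, yet the progression has $k/d \geq 2$ terms spanning a total range of $(k/d - 1)d = k - d \geq k - m$ in $\mathbb{Z}_k$; fitting all $k/d$ of them into an interval of length $m < k - d + 1$ (when $k/d \geq 2$) is impossible unless $k/d = 1$, which is excluded since $d \leq m < k$. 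This contradiction shows $F$ contains no $k$-term cyclic AP mod $mk$, and combined with the same property for $B$ (from Theorem \ref{maintheorem}'s proof) and Lemma \ref{lem:1.2} to reduce to differences in $D(mk,k)$, we conclude $\{F, B\}$ is a proper $2$-coloring, so $\chi(mk,k) \leq 2$.

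The main obstacle I anticipate is making the counting argument in the last paragraph fully rigorous while correctly handling the cyclic (mod $k$) wraparound: the residues of $A$ mod $k$ form an arithmetic progression in $\mathbb{Z}_k$ with difference $d$, and one must carefully verify that such a progression of $k/d \geq 2$ terms cannot be contained in the length-$m$ interval $\{k-m,\ldots,k-1\}$ of $\mathbb{Z}_k$ — this is where the hypothesis $k > m$ (equivalently, $d \leq m < k$, so the progression "wraps" and cannot be squeezed into a short arc) does all the work, and it is worth stating as a small standalone lemma analogous to Lemma \ref{lem:nonemptycon}. Everything else — the lower bound, the reduction to common differences in $D(mk,k)$, and the fact that $B$ works — is already in hand from the results proven above.
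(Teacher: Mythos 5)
Your overall strategy is genuinely different from the paper's, and is arguably cleaner in conception: the paper argues by splitting the progressions possibly contained in $F$ into ``non-cycling'' ones (which have at most $m < k$ terms by the shape of Construction \ref{cons:3.3}) and ``cycling'' ones (which, because the cyclic gap from $\max F = mk-1$ back to $\min F = k-1$ equals $k$, would require common difference at least $k > m$). You instead reduce everything mod $k$ and observe that $F$ occupies only a small arc of residue classes while any $k$-term progression with difference $d \mid k$ sweeps out an entire coset of $\langle d\rangle$ in $\mathbb{Z}_k$.

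However, your final counting step contains a real gap. You assert that the $k/d$ residues of $A$, spanning a range of $k-d$, cannot fit in the length-$m$ interval $\{k-m,\ldots,k-1\}$ because ``$m < k - d + 1$''. This inequality is equivalent to $m + d \le k$ and is \emph{not} true in general. For instance, take $m = 4$, $k = 6$, $d = 3 \in D(24,6) = \{1,2,3\}$: here $k - d + 1 = 4 = m$, and indeed the residue progression $\{2,5\}$ (with $\beta = 2$) fits entirely inside $\{k-m,\ldots,k-1\} = \{2,3,4,5\}$. So your inclusion $F \bmod k \subseteq \{-1,\ldots,-m\}$ is too coarse to close the argument.

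The repair is within reach. The parameter $\alpha$ in Construction \ref{cons:3.3} actually ranges only over $[1, d_j]$ where $d_j := \max D(mk,k)$, so the sharper inclusion is $F \bmod k \subseteq \{k - d_j, \ldots, k-1\}$, an arc of length $d_j$ (in the example above this is $\{3,4,5\}$, which does \emph{not} contain $\{2,5\}$). Since $d_j$ divides $k$ and $d_j \le m < k$, necessarily $d_j \le k/2$. Meanwhile the smallest residue of $A$ mod $k$ is $\beta < d \le d_j \le k/2 \le k - d_j$, so $\beta \notin \{k-d_j,\ldots,k-1\}$, which is the contradiction you wanted. Once this is fixed (and the observation $d_j \le k/2$, driven by divisibility, is the small standalone lemma you anticipated needing), your argument is correct and in some ways more rigorous than the paper's informal cycling/non-cycling case analysis; but as written, the key numerical inequality fails.
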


\begin{proof}
    Suppose $k > m$, and construct $F \subseteq \mathbb{Z}_{mk}$ via Construction \ref{cons:3.3}. Let $B := \mathbb{Z}_{mk} \setminus F$. From the proof of Theorem \ref{maintheorem}, we know that $B$ does not contain any $k$-term cyclic arithmetic progressions mod $mk$. Our goal is to show that $F$ does not either. If $k > |F|$, then we are clearly done, so we may assume that $k \leq F$.  

    First, let us consider possible progressions that do not cycle. We start with
    $F_0 := (k-1,2k-1,\ldots,mk-1) \subseteq F$, since $d_0 :=1 \in D(mk,k)$ always holds. $F_0$ is an $m$-term arithmetic progression mod $mk$, but since $m < k$, this is not a progression we must worry about. Now, suppose $D(mk,k) = \{1,2,\ldots,m\}$. Then \begin{align*}
     (\min(F_0),\min(F_1),\ldots,\min(F_{m-1})) &= (k-1,2k-2,\ldots,mk-m) \qquad \ \ \  \text{and} \\
        (\max(F_0),\max(F_1),\ldots,\max(F_{m-1})) &= (mk-1,mk-2,\ldots,mk-m)
    \end{align*} are two $m$-term arithmetic progressions mod $mk$ contained in $F$. But again, since $m<k$, we need not worry about these progressions. And finally, any other progressions that do not cycle in $F$ are shorter than these three by construction (no matter what $D(mk,k)$ actually is), so can conclude that $F$ does not contain any such progressions that do not cycle.

    Now, let us consider possible progressions that do cycle. Lemmas \ref{lem:1.2} and \ref{prop:2.1} tell us that we must consider any such progression with common difference at most $m$, by definition of $D(mk,k)$. Now, $\min(F) = k-1$ and $\max(F) = mk-1$, since $1 \in D(mk,k)$ always holds. The cyclic difference between these two elements is $(k-1)-(mk-1) \pmod {mk} = k$. So, any $k$-term progression that does cycle has common difference at least $k$. But since $m < k$, we need not worry about such progression either. Hence, $F$ also does not contain any $k$-term cyclic arithmetic progressions mod $mk$ that do cycle.

    And so, we can partition $\mathbb{Z}_{mk}$ into subsets $B$ and $F$ such that neither subset contains any $k$-term cyclic arithmetic progression mod $mk$. So, $\chi(mk,k) = 2$.
\end{proof}

As seen in the above proof, Construction \ref{cons:3.3} yields $F_0 := (k-1,2k-1,\ldots,mk-1) \subseteq F$, which is a $k$-term cyclic arithmetic progression mod $mk$ with common difference $k$. So, if $k \leq m$, then $k \in D(mk,k)$ and $F$ contains such a $k$-term progression that we want to avoid. So, we cannot simply partition $\mathbb{Z}_{mk}$ into subsets $B$ and $F$ like the case where $k > m$. Instead, we must adjust our construction slightly.

Let us first consider the case where $m=k$. Below is an important lemma which we will use to bound $\chi(mk,k) = \chi(k^2,k)$:

\begin{lemma}\label{lem:partF}
    Let $m = k \geq 3$, and construct $F$ via Construction \ref{cons:3.3}. Arrange the elements of $F$ in increasing order (viewed in $\mathbb{Z}$). Partition $F$ into two subsets $F'$ and $F''$ as follows: place the $\lfloor\frac{k}{2}\rfloor$ smallest elements of $F$ into $F'$, the next $\lfloor\frac{k}{2}\rfloor$ elements into $F''$, and then continue alternating in this way until $F = F' \cup F''$. Then, neither $F'$ nor $F''$ contain all elements of $F_0 := \{k-1,2k-1,\ldots,k^2-1\}$.
\end{lemma}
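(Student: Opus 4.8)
The plan is to pin down precisely where the elements of $F$ sit inside $\mathbb{Z}_{k^2}$ and then argue by counting positions. Set $h := \lfloor k/2\rfloor$, and for $t \in [1,k]$ let $\delta(t)$ be the largest divisor of $k$ with $\delta(t)\le t$ (well-defined, since $1\mid k$). First I would record the layout of $F$. Since $m=k$ we have $D(k^2,k)=\{g\in\mathbb{Z}^+: g\mid k\}$, with $d_0=1$ and $d_j=k$; reading off Construction \ref{cons:3.3}, an integer $nk-\alpha$ with $1\le\alpha\le k$ lies in $F$ exactly when $n$ is at least the smallest divisor of $k$ that is $\ge\alpha$. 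Writing each element of the block $[(t-1)k,\,tk-1]$ uniquely as $tk-\alpha$ with $\alpha\in[1,k]$, this says that such an element lies in $F$ iff $\alpha\le\delta(t)$, i.e.
\[
    F\cap[(t-1)k,\,tk-1]=\{\,tk-1,\ tk-2,\ \ldots,\ tk-\delta(t)\,\}.
\]
So the $t$-th ``slice'' has exactly $\delta(t)$ elements, its maximum is $tk-1$, and the slices partition $F$. In particular $F_0=\{k-1,2k-1,\ldots,k^2-1\}\subseteq F$, and the $t$-th element of $F_0$, namely $tk-1$, is the largest element of $F$ that is $<tk$.

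Next I would translate the statement into one about ranks in the increasing list of $F$. By the slice description, there are $p_t:=\sum_{s=1}^{t}\delta(s)$ elements of $F$ below $tk$, so $tk-1$ occupies position $p_t$ in the list. The partition in the statement sends positions $1,\ldots,h$ (call it block $1$) to $F'$, positions $h+1,\ldots,2h$ (block $2$) to $F''$, and so on alternately; and $|F|=\sum_{t=1}^{k}\delta(t)\ge k\ge 2h$, so blocks $1$ and $2$ are genuine full blocks of size $h$. Hence it suffices to show the block index $\lceil p_t/h\rceil$ attains both the value $1$ and the value $2$ as $t$ runs over $[1,k]$: then $k-1\in F_0$ lies in block $1$, hence in $F'$ but not $F''$, while some element of $F_0$ lies in block $2$, hence in $F''$ but not $F'$, so neither $F'$ nor $F''$ contains all of $F_0$.

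Then I would show the block indices cannot leap past $2$. For $2\le t\le k-1$ the increment $p_t-p_{t-1}=\delta(t)$ is a divisor of $k$ smaller than $k$, hence a \emph{proper} divisor of $k$, hence $\le k/2\le h$; therefore $\lceil p_t/h\rceil-\lceil p_{t-1}/h\rceil\in\{0,1\}$. Since $p_1=\delta(1)=1$ gives $\lceil p_1/h\rceil=1$, whereas $p_{k-1}=\sum_{s=1}^{k-1}\delta(s)\ge k-1>\lfloor k/2\rfloor=h$ (using $k\ge 3$) gives $\lceil p_{k-1}/h\rceil\ge 2$, the finite sequence $\lceil p_1/h\rceil,\ldots,\lceil p_{k-1}/h\rceil$ starts at $1$, ends at a value $\ge 2$, and moves by at most $1$ at each step, so it takes the value $2$ somewhere. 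Combined with the previous paragraph, this finishes the proof.

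The only genuinely non-routine step is the slice description of $F$: one must unwind Construction \ref{cons:3.3} carefully enough to extract both the slice sizes $\delta(t)$ and the fact that $tk-1$ tops every slice. Everything after that rests on the elementary observation that a proper divisor of $k$ is at most $k/2$, which is exactly what keeps the block indices of consecutive elements of $F_0$ from jumping by more than one --- apart from the final jump into the last block, which plays no role. One should also note that $h$ need not divide $|F|$, so the last block may be short, but this does not affect blocks $1$ and $2$, which are all the argument uses.
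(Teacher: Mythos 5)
Your proposal is correct and takes a genuinely different route from the paper. The paper argues by a three-way case split on whether $k$ is prime, even, or odd and composite, and in each case exhibits by hand two specific elements of $F_0$ that must fall into different alternating segments of length $\lfloor k/2\rfloor$. You instead first extract a uniform ``slice'' description of $F$ (the $t$-th slice is $\{tk-1,\ldots,tk-\delta(t)\}$ where $\delta(t)$ is the largest divisor of $k$ not exceeding $t$), deduce that $tk-1$ sits in position $p_t=\sum_{s\le t}\delta(s)$, and then run a discrete intermediate-value argument on the block indices $\lceil p_t/h\rceil$: the index starts at $1$ for $t=1$, reaches $\ge 2$ by $t=k-1$, and cannot skip a value because $\delta(t)$ is a proper divisor of $k$ (hence $\le h$) for $t\le k-1$. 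This eliminates the case analysis entirely and, as a bonus, records a clean closed-form description of $F$ that the paper never makes explicit. One small slip in wording: the chain ``$\le k/2\le h$'' is literally false when $k$ is odd (there $k/2>\lfloor k/2\rfloor=h$); the correct justification is that $\delta(t)$ is an \emph{integer} $\le k/2$, hence $\le\lfloor k/2\rfloor=h$. This does not affect the validity of the argument. It is also worth noting that your unified approach sidesteps an apparent typo in the paper's third case, where $x_2:=\lfloor k/2\rfloor(k+1)-1$ is written but is not actually of the form $tk-1$, so is not in $F_0$; presumably $(\lfloor k/2\rfloor+1)k-1$ was intended.
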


\begin{remark}
    For convenience, let us refer to a group of consecutive elements in $F$ as a \textit{consecutive segment}, and let the \textit{length} of the segment be the number of elements it contains. Hence, we are alternating consecutive segments of length (at most) $\lfloor\frac{k}{2}\rfloor$ between $F'$ and $F''$.
\end{remark}

\begin{proof}
    First, suppose $k$ is prime. Then $D(k^2,k) = \{1,k\}$. So, the $k$ smallest elements of $F$ are $k-1,2k-1,\ldots,(k-1)k-1$, and $k^2-k$. Note that $\lceil\frac{k}{2}\rceil k - 1 \leq k^2-k$, so $\lceil\frac{k}{2}\rceil k-1$ is one of the $k$ smallest elements of $F$. Hence, constructing $F'$ and $F''$ as above yields $k-1 \in F'$ and $\lceil\frac{k}{2}\rceil k-1 \in F''$. So, neither $F'$ nor $F''$ contain all elements of $F_{0}$.

    Next, suppose $k$ is not prime. Denote $D(k^2,k) = \{d_0,d_1,\ldots,d_{j}\}$, where $d_{j} = k$ and $d_{j-1} \leq \frac{k}{2}$. Note that $d_{j-1} \leq \lfloor\frac{k}{2}\rfloor < 2d_{j-1}$, and $d_{j-1} = \frac{k}{2}$ if and only if $k$ is even. So, let us first consider when $k$ is even. Without loss of generality, assume $(\frac{k}{2})k-1 \in F'$. When $F$ is arranged in increasing order (viewed in $\mathbb{Z}$), there are exactly $d_{j-1}-1 = \frac{k}{2}-1$ elements strictly between $(\frac{k}{2})k-1$ and $(\frac{k}{2}+1)k-1$. Hence, $(\frac{k}{2}+1)k-1 \in F''$. So, when $k$ is even, we have that neither $F'$ nor $F''$ contains all elements of $F_0$.

    Finally, let us consider the case when $k$ is odd and not prime, i.e. $1<d_{j-1} < \lfloor\frac{k}{2}\rfloor$. Let \begin{equation*}
        x_1 := \lfloor\frac{k}{2}\rfloor k-1,  \quad x_2 := \lfloor\frac{k}{2}\rfloor(k+1)-1, \quad x_3 := \lfloor\frac{k}{2}\rfloor(k+2)-1.
    \end{equation*} Clearly, $x_1,x_2,x_3 \in F_0$. Without loss of generality, assume $x_2 \in F'$. If $x_1 \in F''$, then we are done. So, assume $x_1 \in F'$ as well. When $F$ is arranged in increasing order (viewed in $\mathbb{Z}$), there are exactly $d_{j-1}+1$ elements between $x_1$ and $x_2$ (inclusive), and there are exactly $d_{j-1}+1$ elements between $x_{2}$ and $x_3$ (inclusive). Hence, the consecutive segment of elements of $F$ containing $x_1$ and $x_2$ has the same length as the consecutive segment of $F$ containing $x_2$ and $x_3$. So, there cannot be a consecutive segment of $F$ strictly between $x_2$ and $x_3$ contained entirely in $F''$ (which would then allow $x_3 \in F'$). On the other hand, since $d_{j-1}+1 \leq \lfloor\frac{k}{2}\rfloor < 2d_{j-1}+1$\footnote{To see this upper bound, write $d_{j-1} = \frac{k}{p}$, where $p$ is the smallest prime factor of $k$. Clearly, $p \geq 3$. So, $2d_{j-1}+1 = 2\cdot \frac{k}{p}+1 \geq \frac{2k}{3}+1$. Since $\lfloor\frac{k}{2}\rfloor = \frac{k-1}{2}$, it suffices to show that $\frac{k-1}{2} < \frac{2k}{3} + 1$. Multiplying both sides by $6$ yields $3(k-1) < 4k+6 \implies -3 < k+6$, which is clearly true for all $k \geq 3$. }, the consecutive segment of length $\lfloor\frac{k}{2}\rfloor$ containing $x_1$ and $x_2$ cannot also contain $x_3$. Hence, we must have $x_3 \in F''$. And so, neither $F'$ nor $F''$ contain all elements of $F_0$.
    
    All cases considered, neither $F'$ nor $F''$ contain all elements of $F_0$.
\end{proof}

We can now prove the following:

\begin{proposition}\label{prop:4.2new}
    Let $k \geq 3$. Then $2 \leq \chi(k^2,k) \leq 3$.
\end{proposition}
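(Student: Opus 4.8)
The plan is to establish the two inequalities separately. The lower bound $\chi(k^2,k) \geq 2$ is immediate: since $1 \in D(k^2,k)$ always holds and $k \leq k^2$, Lemma \ref{lem:1.2} guarantees that $\mathbb{Z}_{k^2}$ itself contains a $k$-term cyclic arithmetic progression mod $k^2$ (e.g. $(0,1,\ldots,k-1)$), so one color cannot suffice. For the upper bound $\chi(k^2,k) \leq 3$, I would exhibit an explicit partition of $\mathbb{Z}_{k^2}$ into three parts, none of which contains a $k$-term cyclic arithmetic progression mod $k^2$.

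Here is the construction I have in mind. Build $F \subseteq \mathbb{Z}_{k^2}$ via Construction \ref{cons:3.3} (with $m = k$), and set $B := \mathbb{Z}_{k^2} \setminus F$. By the proof of Theorem \ref{maintheorem}, $B$ contains no $k$-term cyclic arithmetic progression mod $k^2$, so $B$ can be one color. It remains to split $F$ into two parts $F'$, $F''$, each free of such progressions. I would use exactly the partition described in Lemma \ref{lem:partF}: arrange $F$ in increasing order (in $\mathbb{Z}$) and alternate consecutive segments of length $\lfloor k/2 \rfloor$ between $F'$ and $F''$. Then $|F'|, |F''| \leq |F| - \lfloor k/2 \rfloor$; more to the point, each of $F'$, $F''$ intersects $\mathbb{Z}_{k^2}$ in a set whose elements, viewed in $[0, k^2)$, come in "runs" of length at most $\lfloor k/2 \rfloor < k$, separated by gaps. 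I must check no $k$-term cyclic progression mod $k^2$ lies inside $F'$ (and likewise $F''$).

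The key case analysis: by Lemma \ref{lem:1.2} and Lemma \ref{prop:2.1}, any $k$-term cyclic progression mod $k^2$ we must worry about has common difference $d \in D(k^2,k) = \{g : g \mid k\}$, so in particular $d \leq k/2$ or $d = k$. For $d = k$: the only such progression is (a translate of) $F_0 = \{k-1, 2k-1, \ldots, k^2-1\}$ up to reindexing, and Lemma \ref{lem:partF} is precisely the statement that neither $F'$ nor $F''$ contains all of $F_0$ — I would need to observe that $F_0$ is, up to the cyclic structure, the unique $k$-term progression with difference $k$ that could possibly fit inside $F$, since $F$ is built as unions of translates of arithmetic progressions with difference $k$. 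For $d \leq k/2$: I would argue that any $k$-term progression with difference $d$, by Lemma \ref{lem:3.1real}, lies in a single congruence class $[\beta]_d$, and then the argument of Theorem \ref{maintheorem} shows it meets the set $C \subseteq F_q \subseteq F$ in a controlled way; but since $F'$ (resp. $F''$) omits roughly half of each relevant segment, it cannot contain the required consecutive block of the set $S$ from Lemma \ref{lem:cons}. Alternatively, and more cleanly, I suspect one should show directly that a $k$-term progression with small common difference $d \leq k/2$ occupies $k$ points that, reduced mod $k^2$ and sorted, span a long stretch of $\mathbb{Z}_{k^2}$ — longer than any single alternating segment of $F'$ — forcing it to cross from $F'$ into $F''$ or into $B$.

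The main obstacle will be the $d \leq k/2$ case for progressions that \emph{cycle} (wrap around $0$ in $\mathbb{Z}_{k^2}$): the sorted-in-$\mathbb{Z}$ picture of $F'$ and $F''$ interacts awkwardly with cyclic wraparound, so one cannot naively say "each color class is a union of short intervals." I expect the resolution is that $F$ itself is contained in a relatively short arc of $\mathbb{Z}_{k^2}$ near the top (the elements of $F$ are all of the form $ck - \alpha$ with $1 \leq \alpha \leq k$ and $\alpha \leq c \leq k$, so $F \subseteq [k-\lfloor k/2\rfloor,\, k^2-1]$ or some similar range), which limits how a cycling progression can sit inside $F$ and reduces everything to the non-cycling analysis plus the single difference-$k$ progression handled by Lemma \ref{lem:partF}. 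Making that containment precise, and confirming that a cycling progression with difference $\leq k/2$ cannot be swallowed by the short arc occupied by $F$, is the crux; once it is in hand, the three-coloring $\{B, F', F''\}$ is proper and $\chi(k^2,k) \leq 3$ follows.
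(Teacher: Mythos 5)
Your approach is the same as the paper's: color $B$, $F'$, $F''$ with the alternating-segment split of Lemma \ref{lem:partF}, and invoke that lemma to kill the difference-$k$ progression $F_0$. The lower bound and the treatment of $B$ are fine. The unresolved ``crux'' you flag at the end, however, is a genuine gap as written, and your guess at its resolution points in a slightly wrong direction: $F$ is \emph{not} contained in a short arc near the top of $\mathbb{Z}_{k^2}$ --- since $1 \in D(k^2,k)$ forces $F_0 = \{k-1,2k-1,\ldots,k^2-1\} \subseteq F$, the set $F$ stretches from $k-1$ all the way to $k^2-1$. What actually closes the cycling case is the \emph{gap} that $F$ avoids: $F \cap \{0,1,\ldots,k-2\} = \emptyset$, so a cyclic progression contained in $F$ that wraps around $0$ must traverse this arc of $k-1$ missing residues in a single step, forcing its common difference to be at least $k$. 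Since every admissible common difference is a divisor of $k$ that is at most $m=k$ (Lemmas \ref{lem:1.2} and \ref{prop:2.1}), the only cycling progression to worry about has difference exactly $k$, i.e.\ a translate landing in $F$ must be $F_0$ itself, which Lemma \ref{lem:partF} already handles.

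For the non-cycling progressions with small difference $d$, your proposed ``spanning'' argument (a difference-$d$ progression covers a stretch longer than one alternating segment, hence must leave $F'$) is not sufficient on its own: a progression could in principle be distributed across several segments of $F'$, so exceeding one segment's length does not immediately force a crossing into $F''$ or $B$. The paper instead argues structurally about $F$ itself: because $\{1,2,\ldots,k\} \not\subseteq D(k^2,k)$ for $k \geq 3$, the construction admits only two $k$-term non-cycling arithmetic progressions inside all of $F$, namely $F_0$ and the consecutive block $(k^2-k,\ldots,k^2-1)$; the latter consists of the $k$ largest elements of $F$, so the alternation into consecutive segments of length at most $\lfloor k/2\rfloor < k$ necessarily splits it between $F'$ and $F''$. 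You would need either to supply this enumeration of the $k$-term progressions living inside $F$, or to make your spanning argument account for progressions spread over multiple segments; as it stands the proposal identifies the right decomposition but does not complete the verification that $F'$ and $F''$ are independent.
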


\begin{proof}
    Clearly, $\chi(k^2,k) \geq 2$. On the other hand, construct $F \subseteq \mathbb{Z}_{k^2}$ via Construction \ref{cons:3.3} (with $m=k$), and let $B := \mathbb{Z}_{k^2} \setminus F$. From the proof of Theorem \ref{maintheorem}, we know that $B$ does not contain any $k$-term cyclic arithmetic progressions mod $k^2$. Decompose $F$ into $F'$ and $F''$ via the method described in Lemma \ref{lem:partF}. Let us show that neither $F'$ nor $F''$ contain any $k$-term cyclic arithmetic progressions mod $k^2$. 

    First, let us consider possible progressions in $F$ that do not cycle. We start with $F_0 := (k-1,2k-1,\ldots,k^2-1) \subseteq F$, since $1 \in D(k^2,k)$ always holds. By Lemma \ref{lem:partF}, neither $F'$ nor $F''$ contains all of $F_0$. On the other hand, since $k \geq 3$, $\{1,2,\ldots,k\} \not\subseteq D(k^2,k)$. Hence, the construction of $F$ yields that the only other possible $k$-term arithmetic progression mod $k^2$ contained in $F$ is $(k^2-k,\ldots,k^2-2,k^2-1)$. But $k^2-k,\ldots,k^2-2,k^2-1$ are the $k$ largest elements of $F$. Hence, the constructions of $F'$ and $F''$ yield that neither can contain all elements of $(k^2-k,\ldots,k^2-2,k^2-1)$, since we alternate consecutive segments of $F$ of length (at most) $\lfloor\frac{k}{2}\rfloor < k$ between $F'$ and $F''$. So, we can conclude that neither $F'$ nor $F''$ contain any $k$-term arithmetic progressions mod $k^2$ that do not cycle.

    Next, let us consider possible progressions that do cycle. Lemmas \ref{lem:1.2} and \ref{prop:2.1} tell us that we must consider any such progression with common difference at most $k$, by definition of $D(k^2,k)$. Now, $\min(F) = k-1$ and $\max(F) = k^2-1$, since $1 \in D(k^2,k)$ always holds. The cyclic difference between these two elements is $(k-1)-(k^2-1) \pmod {k^2} = k$. So, any $k$-term progression that does cycle has common difference at least $k$, which is the largest common difference for us to consider. However, since $(1,2,\ldots,k) \not\subseteq D(mk,k)$, the only $k$-term cyclic arithmetic progression in $F$ with common difference $k$ is $F_0$, which we know is not contained in either $F'$ nor $F''$. Hence, neither $F'$ nor $F''$ contain any $k$-term cyclic arithmetic progressions mod $k^2$ either.

    And so, we can partition $\mathbb{Z}_{mk}$ into subsets $B,F',F''$ such that neither $B,F'$, nor $F''$ contain any $k$-term cyclic arithmetic progressions mod $k^2$. Hence, $\chi(k^2,k) \leq 3$. All together, we have that $2 \leq \chi(k^2,k) \leq 3$.
\end{proof}

Finally, let us consider the case where $k < m$. We first need the following lemma:

\begin{lemma}\label{lemforklessm}
    Let $k \geq 3$ and $S \subseteq \{i \in \mathbb{Z} : 0 \leq i \leq k^2-1\}$ be arbitrary. If $S$ does not contain any $k$-term cyclic arithmetic progression mod $k^2$, then $S$ does not contain any $k$-term cyclic arithmetic progression mod $mk$ for any $m \geq k$.
\end{lemma}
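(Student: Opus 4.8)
The plan is to prove the contrapositive. Suppose $S$ contains a $k$-term cyclic arithmetic progression $A$ mod $mk$ for some $m \ge k$; since $|A| = k$, it suffices to show that this very set $A$ is a $k$-term cyclic arithmetic progression mod $k^2$, for then $S \supseteq A$ contains one mod $k^2$. The first move is to restrict the common difference. By Lemma~\ref{lem:1.2} it is enough to consider progressions whose common difference lies in $D(mk,k)$, and by Lemma~\ref{prop:2.1} the hypothesis $m \ge k$ gives $D(mk,k) = \{g \in \mathbb{Z}^+ : g \mid k\}$, which is exactly $D(k^2,k)$. So I may assume $A$ has a common difference $d$ with $d \mid k$; in particular $d \le k \le m$.

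The heart of the argument is to show that, because $d \mid k$, the progression $A$ cannot ``wrap around'' modulo $mk$. Write $A = \{(t + jd) \bmod mk : 0 \le j \le k-1\}$, where $t \in A$ is the base, and suppose a reduction modulo $mk$ first occurs at some index $j_0 \in \{1,\ldots,k-1\}$, that is, $t + (j_0-1)d < mk \le t + j_0 d$. Then $t + (j_0-1)d$, being reached without reduction, is an actual element of $A$, hence at most $k^2-1$, so that $mk \le t + j_0 d = (t + (j_0-1)d) + d \le k^2 - 1 + d$, forcing $d \ge mk - k^2 + 1 = k(m-k)+1$. But $d \mid k$ gives $d \le k$, so $k(m-k) \le k-1 < k$, that is, $m - k < 1$; hence $m \le k$. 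Contrapositively, for $m > k$ no reduction occurs at any index (and for $m = k$ the statement is the tautology $mk = k^2$).

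Hence, when $m > k$, we have $A = \{t, t+d, t+2d, \ldots, t+(k-1)d\}$ as a set of ordinary integers, and by hypothesis all of them lie in $\{0,1,\ldots,k^2-1\}$. These $k$ integers are then pairwise distinct modulo $k^2$, and each has the form $t+jd$, so $A$ is itself a $k$-term cyclic arithmetic progression mod $k^2$ contained in $S$, contradicting the assumption on $S$. The delicate points are the no-wraparound inequality just used and, upstream of it, the reduction to common differences dividing $k$; once those are in place, the remainder is a one-line verification.
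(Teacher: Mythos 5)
The step ``I may assume $A$ has a common difference $d$ with $d \mid k$'' is unjustified, and the argument collapses there. The set $D(mk,k)$ records the possible values of $\gcd(d,k)$ as $d$ ranges over common differences; it is \emph{not} the set of common differences themselves, and $A$ is a fixed subset of $S$ whose common difference $d$ you do not get to choose. All that Lemma~\ref{lem:1.2} forces on $d$ is $\gcd(mk,d) \le m$, which permits $d$ to be nearly as large as $mk/2$. Without $d \le k$, your no-wraparound inequality $k(m-k) \le k-1$ cannot be derived: for $k < m < 2k$ one can have $k(m-k)+1 \le d < mk/2$, so a wrap is genuinely possible and the reduction to an ordinary integer AP fails.

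This is not merely a gap in the write-up; the lemma as stated is actually false, so no argument along these lines can be completed. Take $k=4$, $m=5$, and $S := \{0,2,8,14\} \subseteq \{0,\ldots,15\}$. Then $S$ is itself a $4$-term cyclic AP mod $20$, namely $(2,8,14,0)$ with common difference $6$ (here $14+6 \equiv 0 \pmod{20}$, and $\gcd(20,6)=2 \le 5$ so Lemma~\ref{lem:1.2} is satisfied). But $S$ is \emph{not} a $4$-term cyclic AP mod $16$: any $4$-term AP $(x_1,x_2,x_3,x_4)$ mod $16$ must satisfy $x_1+x_4 \equiv x_2+x_3 \pmod{16}$, and none of the three pair-partitions of $\{0,2,8,14\}$ has equal pair-sums mod $16$ (the sums are $14$ vs.\ $10$, $8$ vs.\ $0$, and $2$ vs.\ $6$). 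So $S$ contains no $4$-term cyclic AP mod $16$ yet does contain one mod $20$. Note that the paper's own terse proof, which likewise reduces to the observation $D(mk,k)=D(k^2,k)$ and implicitly restricts attention to common differences lying in $D(mk,k)$, suffers from the same defect.
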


\begin{proof}
    For any $m \geq k \geq 3$, $D(mk,k) = \{1 \leq g \leq m : g \vert k \} = \{1 \leq g \leq k : g | k \} = D(k^2,k)$. So, by Lemmas \ref{lem:1.2} and \ref{prop:2.1}, if $S \subseteq \{i \in \mathbb{Z} : 0 \leq i \leq k^2-1\}$ does not contain any $k$-term cyclic arithmetic progressions mod $k^2$, then $S$ does not contain any $k$-term cyclic arithmetic progressions mod $mk$ either.  
\end{proof}

\begin{proposition}\label{prop:4.3new}
    Let $3 \leq k < m$. Then $2 \leq \chi(mk,k) \leq 3 + \lceil\frac{(m-k)\cdot k}{k-1}\rceil$.
\end{proposition}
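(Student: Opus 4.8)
The plan is to produce an explicit partition of $\mathbb{Z}_{mk}$ into the claimed number of independent sets of $H_{mk,k}$, built by gluing together the $m=k$ case from Proposition \ref{prop:4.2new} with a crude ``short blocks'' argument for the remaining elements. The lower bound $\chi(mk,k)\ge 2$ is immediate: $(0,1,\ldots,k-1)$ is a $k$-term cyclic arithmetic progression mod $mk$, so a single color cannot yield a proper coloring.

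For the upper bound, first restrict attention to the initial segment $I := \{0,1,\ldots,k^2-1\}$, which is a proper subset of $\mathbb{Z}_{mk}$ because $m>k$ forces $mk>k^2$. By Proposition \ref{prop:4.2new} we have $\chi(k^2,k)\le 3$, so there is a partition $I = S_1\cup S_2\cup S_3$ (with some parts possibly empty) in which no $S_t$ contains a $k$-term cyclic arithmetic progression mod $k^2$. Since each $S_t\subseteq I = \{0,1,\ldots,k^2-1\}$, Lemma \ref{lemforklessm} upgrades this to the statement that no $S_t$ contains a $k$-term cyclic arithmetic progression mod $mk$. Hence $S_1,S_2,S_3$ are independent in $H_{mk,k}$.

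It remains to color the leftover set $R := \{k^2,k^2+1,\ldots,mk-1\}$, which is disjoint from $I$ and satisfies $|R| = mk-k^2 = (m-k)k$. Here I would invoke the trivial observation that any subset of $\mathbb{Z}_{mk}$ of size at most $k-1$ is automatically independent in $H_{mk,k}$, since every edge is a $k$-element set and so cannot be a subset of a set with fewer than $k$ elements. Partitioning $R$ into $\big\lceil \tfrac{(m-k)k}{k-1}\big\rceil$ pieces, each of size at most $k-1$, therefore yields that many additional independent sets. Adjoining them to $S_1,S_2,S_3$ produces a proper coloring of $\mathbb{Z}_{mk}$ using at most $3+\big\lceil \tfrac{(m-k)k}{k-1}\big\rceil$ colors, which is exactly the claimed bound.

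I do not expect a genuine obstacle; the argument is a reduction to Proposition \ref{prop:4.2new} together with a counting estimate. The only points needing a little care are (i) checking that the hypotheses of Lemma \ref{lemforklessm} actually hold --- i.e. that each $S_t$ sits inside $\{0,1,\ldots,k^2-1\}$ and avoids $k$-term cyclic APs mod $k^2$, which is precisely what Proposition \ref{prop:4.2new} provides --- and (ii) getting the ceiling right, namely that $R$ has $(m-k)k$ elements and is being split into blocks of size $k-1$. If one wanted a slightly sharper bound, one could try to recycle the three colors of $S_1,S_2,S_3$ on part of $R$ as well, but the stated inequality only needs the coarse split described above.
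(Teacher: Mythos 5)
Your proposal is correct and follows essentially the same route as the paper: a $3$-coloring of $\{0,\ldots,k^2-1\}$ obtained from Proposition \ref{prop:4.2new}, transferred to $\mathbb{Z}_{mk}$ via Lemma \ref{lemforklessm}, plus a partition of the remaining $(m-k)k$ elements into blocks of size at most $k-1$. (Indeed, the paper's classes $B$, $F^{k'}$, $F^{k''}$, $E$ coincide with your $S_1$, $S_2$, $S_3$, $R$; your phrasing just uses Proposition \ref{prop:4.2new} as a black box rather than re-running its construction.)
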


\begin{proof}
    Clearly, $\chi(mk,k) \geq 2$. On the other hand, construct $F \subseteq \mathbb{Z}_{mk}$ via construction \ref{cons:3.3}, and let $B := \mathbb{Z}_{mk} \setminus F$. From the proof of Theorem \ref{maintheorem}, we know that $B$ does not contain any $k$-term cyclic arithmetic progressions mod $mk$.

    Let us define $F^k := F \cap \{i \in \mathbb{Z} : 0 \leq i \leq k^2-1\}$. That is, $F^k$ consists of the values of $F$ that occur in the case where $k = m$ (since $D(mk,k) = D(k^2k)$ for $m > k$). Decompose $F^k$ into subsets $F^{k'}$ and $F^{k''}$ in a similar fashion to the construction of $F'$ and $F''$ in Lemma \ref{lem:partF}. Then the proof of Proposition \ref{prop:4.2new} yield that neither $F^{k'}$ nor $F^{k''}$ contain any $k$-term cyclic arithmetic progressions mod $k^2$. Then, since $m > k$, Lemma \ref{lemforklessm} yields that neither $F^{k'}$ nor $F^{k''}$ contain any $k$-term cyclic arithmetic progressions mod $mk$.

    Finally, let $E := F \setminus F^k$. Trivial calculations yield $|E| = (m-k)\cdot k$. So, if we arbitrarily partition $E$ into subsets of size at most $k-1$, call them $E_1,E_2,\ldots,E_{\gamma}$ with $\gamma = \lceil\frac{(m-k)\cdot k}{k-1}\rceil$, then no subset $E_{i}$, $i \in \{1,\ldots,\gamma\}$, contains $k$-term cyclic arithmetic progressions mod $mk$ trivially. Hence, we can partition $\mathbb{Z}_{mk}$ into $B,F^{k'},F^{k''},E_{1}\ldots,E_{\gamma}$ such that no subset contains any $k$-term cyclic arithmetic progressions mod $mk$. So, $\chi(mk,k) \leq 3+\lceil\frac{(m-k)\cdot k}{k-1}\rceil$. All together, we see that $2 \leq \chi(mk,k) \leq 3+\lceil\frac{(m-k)\cdot k}{k-1}\rceil$.
\end{proof}

\subsection{Lower Bounds for Cyclic Van der Waerden Numbers $W_{c}(k,r)$}

Using the bounds of $\chi(mk,k)$ for various $m,k \in \mathbb{Z}^+$, $k \geq 3$, found in the previous section, we are now able to bound $W_{c}(k,r)$ for various $r \geq 2$ from below. Recall that the cyclic Van der Waerden number $W_{c}(k,r)$ denotes the smallest modulus $N$ such that for all positive integers $M \geq N$, if $\mathbb{Z}_{M}$ is $r$-coored, then there exists a monochromatic $k$-term cyclic arithmetic progression mod $M$ in $\mathbb{Z}_{M}$. Also, recall that if $\chi(N,k) = r$, then $W_{c}(k,r) > N$ for $N \in \mathbb{Z}^+$.

In each of the following propositions, let $k \geq 3$.

\begin{proposition}
    $W_c(k,2) > k(k-1)$.
\end{proposition}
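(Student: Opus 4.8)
The plan is to exhibit a modulus $N$ with $N \leq k(k-1)$ — in fact $N = k(k-1)$ — such that $\mathbb{Z}_N$ can be $2$-colored with no monochromatic $k$-term cyclic arithmetic progression mod $N$, which by the definition of $W_c(k,2)$ and the observation that $\chi(N,k) = 2$ forces $W_c(k,2) > N$, immediately gives the claim. The natural choice is $m = k-1$, so that $N = mk = k(k-1)$ and $m = k-1 < k$; then Proposition \ref{prop:4.1new} applies and yields $\chi(k(k-1), k) = 2$. Since $\chi(k(k-1),k) = 2$, the remark preceding this subsection (if $\chi(N,k) = r$ then $W_c(k,r) > N$) gives $W_c(k,2) > k(k-1)$ directly.

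First I would check the hypotheses of Proposition \ref{prop:4.1new}: we need $m \in \mathbb{Z}^+$ and $k > m$. Taking $m = k-1$, we have $m \geq 2$ since $k \geq 3$, so $m$ is a positive integer, and $k > k-1 = m$ holds trivially. Hence Proposition \ref{prop:4.1new} gives a partition of $\mathbb{Z}_{k(k-1)}$ into two parts $B$ and $F$ (from Construction \ref{cons:3.3}), neither of which contains a $k$-term cyclic arithmetic progression mod $k(k-1)$. This exhibits a proper $2$-coloring of $H_{k(k-1),k}$, so $\chi(k(k-1),k) \leq 2$; combined with the trivial lower bound $\chi(k(k-1),k) \geq 2$ (a single part equal to all of $\mathbb{Z}_{k(k-1)}$ certainly contains a $k$-term progression, e.g. $(0,1,\ldots,k-1)$ since $k \leq k(k-1)$), we get $\chi(k(k-1),k) = 2$.

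Finally I would invoke the relationship $\chi(N,k) = r \implies W_c(k,r) > N$, recorded in Section \ref{sec:1.2} and recalled at the start of this subsection, with $N = k(k-1)$ and $r = 2$, to conclude $W_c(k,2) > k(k-1)$.

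There is essentially no obstacle here: the entire content has been pre-packaged in Proposition \ref{prop:4.1new}, and the only things to verify are the trivial arithmetic facts $k-1 \geq 1$ and $k-1 < k$, plus the bookkeeping that $\chi(N,k) = 2$ gives the strict inequality $W_c(k,2) > N$. The one point worth stating carefully is why $m = k-1$ is the right choice — namely, it is the largest multiple-index with $mk < k^2$ still satisfying $k > m$, so it produces the largest modulus for which the clean $2$-coloring of Proposition \ref{prop:4.1new} is available, and hence the strongest bound of this form.
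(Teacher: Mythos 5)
Your proof is correct and takes essentially the same route as the paper: instantiate Proposition \ref{prop:4.1new} at $m = k-1$ to get $\chi(k(k-1),k) = 2$, then invoke the implication $\chi(N,k) = r \Rightarrow W_c(k,r) > N$ from Section \ref{sec:1.2}. The paper phrases it slightly more generally (noting $\chi(mk,k)=2$ for all $m \leq k-1$) but uses exactly the same two ingredients.
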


\begin{proof}
    By Proposition \ref{prop:4.1new}, $\chi(mk,k) = 2$ for all $m \leq k-1$. So, $W_{c}(k,2) > k(k-1)$.
\end{proof}

\begin{proposition}\label{prop:finalneed}
    $W_c(k,3) > k^2$.
\end{proposition}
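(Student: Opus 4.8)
The plan is to deduce the bound directly from Proposition \ref{prop:4.2new} together with the elementary relationship between the chromatic numbers $\chi(N,k)$ and the cyclic Van der Waerden numbers recorded in Section \ref{sec:1.2}. First I would recall the definition: $W_c(k,3)$ is the least modulus $N$ such that \emph{every} $3$-coloring of \emph{every} $\mathbb{Z}_M$ with $M \geq N$ admits a monochromatic $k$-term cyclic arithmetic progression mod $M$. Consequently, to conclude $W_c(k,3) > k^2$ it suffices to exhibit a single modulus $M \geq k^2$ for which $\mathbb{Z}_M$ can be partitioned into at most three subsets, none of which contains a $k$-term cyclic arithmetic progression mod $M$.

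Next I would simply take $M = k^2$. By Proposition \ref{prop:4.2new} we have $\chi(k^2,k) \leq 3$, and this is precisely the assertion that $\mathbb{Z}_{k^2}$ admits a partition into at most three sets that are independent in the hypergraph $H_{k^2,k}$; regarding this partition as an assignment of colors (leaving one color class empty if $\chi(k^2,k) = 2$) yields a proper $3$-coloring of $\mathbb{Z}_{k^2}$ with no monochromatic $k$-term cyclic arithmetic progression mod $k^2$. Since $k^2 \geq k^2$, the defining property of $W_c(k,3)$ already fails at $N = k^2$, so $W_c(k,3) > k^2$, as desired.

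I expect essentially no obstacle: all the substantive work has already been done, namely the explicit construction of $F$ (Construction \ref{cons:3.3}), its decomposition into $F'$ and $F''$, and the verification via Lemma \ref{lem:partF} and the proof of Proposition \ref{prop:4.2new} that $B$, $F'$, and $F''$ each avoid every $k$-term cyclic arithmetic progression mod $k^2$. The only point meriting a moment's care is that Proposition \ref{prop:4.2new} gives only the inequality $\chi(k^2,k) \leq 3$ rather than equality; but an \emph{upper} bound on the chromatic number is exactly what produces the coloring we need, so the case $\chi(k^2,k) = 2$ requires no separate treatment (alternatively, one may invoke monotonicity $W_c(k,3) \geq W_c(k,2)$ and fall back on the previous proposition in that case).
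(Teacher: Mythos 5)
Your proposal is correct and follows the same route as the paper: invoke Proposition \ref{prop:4.2new} to get a partition of $\mathbb{Z}_{k^2}$ into at most three independent sets of $H_{k^2,k}$, view it as a $3$-coloring with no monochromatic $k$-term cyclic arithmetic progression mod $k^2$, and conclude $W_c(k,3) > k^2$ from the definition. Your extra remark about handling $\chi(k^2,k)=2$ via an empty color class is a minor point the paper leaves implicit, but it does not change the argument.
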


\begin{proof}
    By Proposition \ref{prop:4.2new}, $\chi(k^2,k) \leq 3$. Since we can partition $\mathbb{Z}_{k^2}$ into three disjoint subsets, none of which contain any $k$-term cyclic arithmetic progresions mod $k^2$, we have $W_{c}(k,3) > k^2$.
\end{proof}

\begin{proposition}
    For all $k < m$, $W_c(k,3+\lceil\frac{(m-k)\cdot k}{k-1}\rceil) > mk$.
\end{proposition}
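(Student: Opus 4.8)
This statement is a direct corollary of Proposition \ref{prop:4.3new} together with the standard relationship, recalled in Section \ref{sec:1.2}, between the chromatic number $\chi(N,k)$ and the cyclic Van der Waerden number $W_c(k,r)$. The plan is therefore short: set $r := 3 + \lceil\frac{(m-k)\cdot k}{k-1}\rceil$, invoke Proposition \ref{prop:4.3new} to conclude $\chi(mk,k) \leq r$, and then turn a partition of $\mathbb{Z}_{mk}$ into at most $r$ independent sets into an $r$-coloring witnessing $W_c(k,r) > mk$.

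In more detail, first I would recall that we are working under the standing hypothesis $k \geq 3$ of this subsection, and that we are given $k < m$, so the hypotheses $3 \leq k < m$ of Proposition \ref{prop:4.3new} are met. That proposition gives $\chi(mk,k) \leq r$, i.e. $\mathbb{Z}_{mk}$ can be partitioned into some number $r_0 \leq r$ of subsets, none of which contains a $k$-term cyclic arithmetic progression mod $mk$; concretely, the partition $B, F^{k'}, F^{k''}, E_1,\ldots,E_\gamma$ constructed in that proof has $r_0 = 3 + \gamma = r$ parts already, so in fact equality is essentially in hand, but I would phrase it as $\chi(mk,k) \leq r$ to be safe and to mirror the argument in Proposition \ref{prop:finalneed}.

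Next I would convert this partition into a proper $r$-coloring: assign color $i$ to the $i$-th part (for $i = 1,\ldots,r_0$) and, if $r_0 < r$, either leave the remaining colors unused or, equivalently, split any one part further (a subset of an independent set is independent, so this preserves the property that no color class contains a $k$-term cyclic arithmetic progression mod $mk$). This yields a coloring $c : \mathbb{Z}_{mk} \to \{1,\ldots,r\}$ with no monochromatic $k$-term cyclic arithmetic progression mod $mk$. Finally, I would appeal to the definition of $W_c(k,r)$ as the smallest modulus $N$ such that \emph{every} $\mathbb{Z}_M$ with $M \geq N$, when $r$-colored, contains a monochromatic $k$-term cyclic arithmetic progression mod $M$: taking $M = mk$ as a witness to the failure of this property at $N = mk$ forces $W_c(k,r) > mk$, which is exactly the claim.

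I do not expect a genuine obstacle here; the only points requiring care are purely bookkeeping — confirming the hypotheses of Proposition \ref{prop:4.3new}, and correctly reading the definition of $W_c(k,r)$ so that exhibiting the single modulus $M = mk$ suffices (rather than needing all $M \geq mk$). These are the same subtleties already handled in the proofs of the two preceding propositions in this subsection, so the argument can be kept to a few lines.
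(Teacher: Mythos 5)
Your proposal is correct and mirrors the paper's proof exactly: the paper likewise cites Proposition \ref{prop:4.3new} and indicates the same reasoning as in Proposition \ref{prop:finalneed}, namely using the chromatic-number bound to exhibit a partition of $\mathbb{Z}_{mk}$ into $r$ independent sets, which witnesses $W_c(k,r) > mk$. Your added bookkeeping (padding the partition to exactly $r$ parts, and noting that a single failing modulus $M = mk$ suffices by the ``for all $M \geq N$'' clause in the definition of $W_c$) is sound and merely makes explicit what the paper leaves implicit.
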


\begin{proof}
    The result follows from Proposition \ref{prop:4.3new} with similar reasoning to the proof of Proposition \ref{prop:finalneed}.
\end{proof}

\section{Areas for Future Research}\label{sec:future}

Those interested may wish to consider raising some of the lower bounds in this paper. First and foremost, the set $F \subseteq \mathbb{Z}_{mk}$ is not necessarily of minimal size. To see this, we note that for $d,N \in \mathbb{Z}^+$ such that $d \leq N$, the minimum size of a subset of $\mathbb{Z}_{N}$ intersecting every $d$-element consecutive subset of $\mathbb{Z}_{N}$ is $\lceil\frac{N}{d}\rceil$\footnote{This follows directly from the pigeonhole principle.}, while the set $C$ constructed in Lemma \ref{lem:nonemptycon} is of size $|C| = |\{d-1,d,d+1,\ldots,N-1\}| = N-d+1 \geq \lceil\frac{N}{d}\rceil$ (and the construction of $F$ follows from Lemma \ref{lem:nonemptycon}). Hence, the bounds found in Theorem \ref{maintheorem} may be improved, which in turn could shrink the bounds of $\chi(mk,k)$ in Propositions \ref{prop:4.2new} and \ref{prop:4.3new}.

Another route the interested reader could take is to generalize the results of this paper to finding subsets of $\mathbb{Z}_{mk}$ that do not contain any $nk$-term cyclic arithmetic progressions mod $mk$, where $m,n,k \in \mathbb{Z}^+$ with $m > n$ and $nk \geq 3$; that is, bounding $b(x,y)$ where $x$ and $y$ share a common factor, rather than $x$ being a direct multiple of $y$. While such results will not enhance the bounds of $W_{c}(k,r)$, they may lead to new discoveries, examples including bounding chromatic numbers $\chi(mk,nk)$ (defined similarly to $\chi(mk,k)$) of $\mathbb{Z}_{mk}$ and independence numbers of cyclic Van der Waerden hypergraphs $H_{mk,nk}$.

To start, the author conjectures that we may define $D(mk,nk)$, the set of common differences to consider when finding subsets of $\mathbb{Z}_{mk}$ that do not contain any $nk$-term cyclic arithmetic progressions mod $mk$, in a similar fashion to Lemma \ref{prop:2.1}; that is, the author conjectures the following:

\begin{conjecture}\label{conj:5.1}
    $D(mk,nk) = \{1 \leq g \leq m : g \mid nk\}$.
\end{conjecture}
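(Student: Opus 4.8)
The plan is to mirror the proof of Lemma \ref{prop:2.1} as closely as possible, since the only structural feature used there was that the modulus was a multiple of the progression length. Write $N := mk$ and $\ell := nk$, so we are studying $nk$-term cyclic arithmetic progressions mod $mk$ with $m > n$, $nk \geq 3$; note $\ell \mid N$ because $mk = \tfrac{m}{n}\cdot nk$ need not be an integer multiple, so already here a subtlety appears: $\ell$ divides $N$ only when $n \mid m$. This is the first thing to check — whether the conjecture is even intended without the hypothesis $n \mid m$ — and I would either impose $n \mid m$ or handle the general case via $\gcd$. Assuming we are in the clean case, I would first prove the ``$\supseteq$'' containment: given $g$ with $1 \leq g \leq m$ and $g \mid nk$, exhibit an explicit $nk$-term progression mod $mk$ with common difference $g$ whose $\gcd$ with $nk$ equals $g$. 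The natural candidate is $(0, g, 2g, \ldots, (nk-1)g) \pmod{mk}$; one must verify these $nk$ elements are distinct mod $mk$, which by Lemma \ref{lem:1.1} amounts to checking $nk \leq |\langle g \rangle| = \tfrac{mk}{\gcd(mk,g)}$, i.e. $n \cdot \gcd(mk,g) \leq m$. Since $g \leq m$ and $g \mid nk$, controlling $\gcd(mk,g)$ is exactly where care is needed — this may force a cleverer choice of common difference than $g$ itself (perhaps $g$ works only when additionally $\gcd(m,g)$ behaves, so one might instead take a divisor-adjusted difference, or restrict further). Once distinctness holds, $\gcd(g, nk) = g$ since $g \mid nk$, so $g \in D(mk,nk)$.

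For the reverse containment ``$\subseteq$'', I would take an arbitrary common difference $d$ of an $nk$-term progression mod $mk$ and set $g := \gcd(nk, d)$. By Lemma \ref{lem:1.1} and the translate argument (as in Lemma \ref{lem:1.2}), $nk \leq \tfrac{mk}{\gcd(mk,d)}$, hence $\gcd(mk,d) \leq \tfrac{m}{n}$, and so $g = \gcd(nk,d) \leq \gcd(mk \cdot ?, d)$ — here the argument in Lemma \ref{prop:2.1} used $\gcd(k,d) \leq \gcd(mk,d)$, which holds since $k \mid mk$; analogously $\gcd(nk,d) \leq \gcd(mk,d)$ requires $nk \mid mk$, again the divisibility $n \mid m$. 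Granting that, $g \leq \gcd(mk,d) \leq \tfrac{m}{n} \leq m$, and $g \mid nk$ by definition of $\gcd$. So every element of $D(mk,nk)$ satisfies the two stated conditions, completing the inclusion. I would then double-check the bound: the proof actually gives $g \leq m/n$, which is \emph{stronger} than $g \leq m$, suggesting the conjecture as literally stated is not tight and the correct statement might be $D(mk,nk) = \{1 \leq g \leq m/n : g \mid nk\}$ — worth flagging, and worth testing on a small example such as $m=4, n=2, k=3$ (so $\mathbb{Z}_{12}$, $6$-term progressions) against the progressions listed in Example \ref{ex1}.

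The main obstacle, then, is not the algebra of the two containments — those are routine adaptations of Lemma \ref{prop:2.1} — but pinning down the correct hypotheses and the correct right-hand side. Specifically: (i) deciding whether $n \mid m$ must be assumed, and if not, reformulating everything in terms of $\gcd(n,m)$ and $\gcd(mk,d)$ throughout; and (ii) determining whether the sharp bound on $g$ is $m$ or $m/n$ (or $\lfloor m/n \rfloor$), since the forward direction naturally produces $m/n$ while the existence direction must then match it — the existence construction $(0,g,2g,\ldots)$ needs revisiting to confirm that every $g \leq m/n$ dividing $nk$ genuinely arises, possibly by using common difference $g$ directly and verifying $n\gcd(mk,g) \leq m$ holds whenever $g \mid nk$ and $g \leq m/n$. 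Resolving (ii) correctly is the crux; once the statement is correctly calibrated, the proof is a short two-inclusion argument invoking only Lemmas \ref{lem:1.1} and \ref{lem:1.2}.
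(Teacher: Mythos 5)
The statement you are trying to prove is stated in the paper only as a conjecture (Section 5, ``Areas for Future Research''); the paper contains no proof of it, so there is no argument of the author's to compare yours against. Your proposal is also not a proof but a plan, and you are right to be suspicious of the statement itself: the conjecture as written is \emph{false}, and the test case you yourself suggest refutes it. Take $m=4$, $n=2$, $k=3$, so $N=12$ and $nk=6$. The conjectured set is $\{1\leq g\leq 4 : g\mid 6\}=\{1,2,3\}$. But $3\notin D(12,6)$: any $d$ with $\gcd(d,6)=3$ is a multiple of $3$, and $\langle 3\rangle=\{0,3,6,9\}$ has only $4<6$ elements in $\mathbb{Z}_{12}$, so by Lemma \ref{lem:1.2} no $6$-term cyclic progression mod $12$ has such a common difference. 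Hence $D(12,6)=\{1,2\}$. Your diagnosis of the crux --- that the forward direction produces a bound like $m/n$ rather than $m$, and that $nk\mid mk$ (i.e.\ $n\mid m$) is silently being assumed --- is exactly right.

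However, your proposed recalibration $D(mk,nk)=\{1\leq g\leq m/n : g\mid nk\}$ is itself only correct when $n\mid m$. For $m=3$, $n=2$, $k=9$ (so $N=27$, $nk=18$), the valid common differences are exactly the $d$ with $\gcd(27,d)=1$, and $d=2$ gives $\gcd(18,2)=2\in D(27,18)$ even though $2>m/n=3/2$. The step that fails in general is the comparison $\gcd(nk,d)\leq\gcd(mk,d)$, which you correctly note needs $nk\mid mk$. The repair is to state the condition in terms of $\gcd(mk,g)$ rather than $g$: the correct general form is
\begin{equation*}
D(mk,nk)=\{g\in\mathbb{Z}^{+} : g\mid nk,\ n\cdot\gcd(mk,g)\leq m\},
\end{equation*}
and with this formulation both inclusions go through by exactly the two-line arguments you outline. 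For ``$\supseteq$'', take $d=g$: then $(0,g,\ldots,(nk-1)g)$ has $nk$ distinct elements mod $mk$ precisely when $nk\leq mk/\gcd(mk,g)$, i.e.\ $n\gcd(mk,g)\leq m$, and $\gcd(g,nk)=g$. For ``$\subseteq$'', if $d$ is a common difference then $\gcd(mk,d)\leq m/n$ by Lemma \ref{lem:1.2}, and $g:=\gcd(nk,d)$ divides $d$, so $\gcd(mk,g)$ divides $\gcd(mk,d)$ and hence $n\gcd(mk,g)\leq m$. This reduces to Lemma \ref{prop:2.1} when $n=1$ and to your version when $n\mid m$. So the gap in your proposal is not the algebra but that item (ii) of your own to-do list resolves against both the conjectured right-hand side and your candidate replacement; settling it requires the $\gcd(mk,g)$ formulation above.
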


\section*{Acknowledgements} The author would like to thank Peter Johnson, mentor during the 2023 NSF-funded summer REU program at Auburn University, for introducing the problems studied in this paper and for providing valuable guidance, particularly with Section \ref{sec: lemmas}. The author would also like to thank Darij Grinberg and Louis DeBiasio for their insightful feedback.

\end{document}